\numberwithin{equation}{section}
\newtheorem{thm}{Theorem}[section]
\newtheorem{cor}[thm]{Corollary}
\newtheorem{lem}[thm]{Lemma}
\newtheorem{prop}[thm]{Proposition}
\newtheorem{example}[thm]{Example}
\newtheorem{defn}[thm]{Definition}
\newtheorem{rem}[thm]{Remark}
\numberwithin{equation}{section}
\begin{document}
\newcommand{\beqa}{\begin{eqnarray}}
\newcommand{\eeqa}{\end{eqnarray}}
\newcommand{\thmref}[1]{Theorem~\ref{#1}}
\newcommand{\secref}[1]{Sect.~\ref{#1}}
\newcommand{\lemref}[1]{Lemma~\ref{#1}}
\newcommand{\propref}[1]{Proposition~\ref{#1}}
\newcommand{\corref}[1]{Corollary~\ref{#1}}
\newcommand{\remref}[1]{Remark~\ref{#1}}
\newcommand{\er}[1]{(\ref{#1})}
\newcommand{\nc}{\newcommand}
\newcommand{\rnc}{\renewcommand}

\nc{\cal}{\mathcal}

\nc{\goth}{\mathfrak}
\rnc{\bold}{\mathbf}
\renewcommand{\frak}{\mathfrak}
\renewcommand{\Bbb}{\mathbb}

\nc{\trr}{\triangleright}
\nc{\trl}{\triangleleft}

\newcommand{\id}{\text{id}}
\nc{\Cal}{\mathcal}
\nc{\Xp}[1]{X^+(#1)}
\nc{\Xm}[1]{X^-(#1)}
\nc{\on}{\operatorname}
\nc{\ch}{\mbox{ch}}
\nc{\Z}{{\bold Z}}
\nc{\J}{{\mathcal J}}
\nc{\C}{{\bold C}}
\nc{\Q}{{\bold Q}}
\nc{\oC}{{\widetilde{C}}}
\nc{\oc}{{\tilde{c}}}
\nc{\ocI}{ \overline{\cal I}}
\nc{\og}{{\tilde{\gamma}}}
\nc{\lC}{{\overline{C}}}
\nc{\lc}{{\overline{c}}}
\nc{\Rt}{{\tilde{R}}}

\nc{\tW}{{\textsf{W}}}
\nc{\tG}{{\textsf{G}}}
\nc{\cY}{{\cal{Y}}}
\nc{\cZ}{{\cal{Z}}}
\nc{\tY}{{\textsf{Y}}}
\nc{\tZ}{{\textsf{Z}}}

\nc{\tz}{{\tilde{z}}}

\nc{\cL}{{\cal{L}}}
\nc{\cK}{{\cal{K}}}

\nc{\tw}{{\textsf{w}}}
\nc{\tg}{{\textsf{g}}}

\nc{\tx}{{\textsf{x}}}
\nc{\tho}{{\textsf{h}}}
\nc{\tk}{{\textsf{k}}}
\nc{\tep}{{\bf{\cal E}}}

\nc{\te}{{\textsf{e}}}
\nc{\tf}{{\textsf{f}}}
\nc{\tK}{{\textsf{K}}}

\nc{\odel}{{\overline{\delta}}}

\def\pr#1{\left(#1\right)_\infty}  

\renewcommand{\P}{{\mathcal P}}
\nc{\N}{{\Bbb N}}
\nc\beq{\begin{equation}}
\nc\enq{\end{equation}}
\nc\lan{\langle}
\nc\ran{\rangle}
\nc\bsl{\backslash}
\nc\mto{\mapsto}
\nc\lra{\leftrightarrow}
\nc\hra{\hookrightarrow}
\nc\sm{\smallmatrix}
\nc\esm{\endsmallmatrix}
\nc\sub{\subset}
\nc\ti{\tilde}
\nc\nl{\newline}
\nc\fra{\frac}
\nc\und{\underline}
\nc\ov{\overline}
\nc\ot{\otimes}

\nc\ochi{\overline{\chi}}
\nc\bbq{\bar{\bq}_l}
\nc\bcc{\thickfracwithdelims[]\thickness0}
\nc\ad{\text{\rm ad}}
\nc\Ad{\text{\rm Ad}}
\nc\Hom{\text{\rm Hom}}
\nc\End{\text{\rm End}}
\nc\Ind{\text{\rm Ind}}
\nc\Res{\text{\rm Res}}
\nc\Ker{\text{\rm Ker}}
\rnc\Im{\text{Im}}
\nc\sgn{\text{\rm sgn}}
\nc\tr{\text{\rm tr}}
\nc\Tr{\text{\rm Tr}}
\nc\supp{\text{\rm supp}}
\nc\card{\text{\rm card}}
\nc\bst{{}^\bigstar\!}
\nc\he{\heartsuit}
\nc\clu{\clubsuit}
\nc\spa{\spadesuit}
\nc\di{\diamond}
\nc\cW{\cal W}
\nc\cG{\cal G}
\nc\ocW{\overline{\cal W}}
\nc\ocZ{\overline{\cal Z}}
\nc\al{\alpha}
\nc\bet{\beta}
\nc\ga{\gamma}
\nc\de{\delta}
\nc\ep{\epsilon}
\nc\io{\iota}
\nc\om{\omega}
\nc\si{\sigma}
\rnc\th{\theta}
\nc\ka{\kappa}
\nc\la{\lambda}
\nc\ze{\zeta}

\nc\vp{\varpi}
\nc\vt{\vartheta}
\nc\vr{\varrho}

\nc\odelta{\overline{\delta}}
\nc\Ga{\Gamma}
\nc\De{\Delta}
\nc\Om{\Omega}
\nc\Si{\Sigma}
\nc\Th{\Theta}
\nc\La{\Lambda}

\nc\boa{\bold a}
\nc\bob{\bold b}
\nc\boc{\bold c}
\nc\bod{\bold d}
\nc\boe{\bold e}
\nc\bof{\bold f}
\nc\bog{\bold g}
\nc\boh{\bold h}
\nc\boi{\bold i}
\nc\boj{\bold j}
\nc\bok{\bold k}
\nc\bol{\bold l}
\nc\bom{\bold m}
\nc\bon{\bold n}
\nc\boo{\bold o}
\nc\bop{\bold p}
\nc\boq{\bold q}
\nc\bor{\bold r}
\nc\bos{\bold s}
\nc\bou{\bold u}
\nc\bov{\bold v}
\nc\bow{\bold w}
\nc\boz{\bold z}

\nc\ba{\bold A}
\nc\bb{\bold B}
\nc\bc{\bold C}
\nc\bd{\bold D}
\nc\be{\bold E}
\nc\bg{\bold G}
\nc\bh{\bold H}
\nc\bi{\bold I}
\nc\bj{\bold J}
\nc\bk{\bold K}
\nc\bl{\bold L}
\nc\bm{\bold M}
\nc\bn{\bold N}
\nc\bo{\bold O}
\nc\bp{\bold P}
\nc\bq{\bold Q}
\nc\br{\bold R}
\nc\bs{\bold S}
\nc\bt{\bold T}
\nc\bu{\bold U}
\nc\bv{\bold V}
\nc\bw{\bold W}
\nc\bz{\bold Z}
\nc\bx{\bold X}

\nc\ca{\mathcal A}
\nc\cb{\mathcal B}
\nc\cc{\mathcal C}
\nc\cd{\mathcal D}
\nc\ce{\mathcal E}
\nc\cf{\mathcal F}
\nc\cg{\mathcal G}
\rnc\ch{\mathcal H}
\nc\ci{\mathcal I}
\nc\cj{\mathcal J}
\nc\ck{\mathcal K}
\nc\cl{\mathcal L}
\nc\cm{\mathcal M}
\nc\cn{\mathcal N}
\nc\co{\mathcal O}
\nc\cp{\mathcal P}
\nc\cq{\mathcal Q}
\nc\car{\mathcal R}
\nc\cs{\mathcal S}
\nc\ct{\mathcal T}
\nc\cu{\mathcal U}
\nc\cv{\mathcal V}
\nc\cz{\mathcal Z}
\nc\cx{\mathcal X}
\nc\cy{\mathcal Y}

\nc\e[1]{E_{#1}}
\nc\ei[1]{E_{\delta - \alpha_{#1}}}
\nc\esi[1]{E_{s \delta - \alpha_{#1}}}
\nc\eri[1]{E_{r \delta - \alpha_{#1}}}
\nc\ed[2][]{E_{#1 \delta,#2}}
\nc\ekd[1]{E_{k \delta,#1}}
\nc\emd[1]{E_{m \delta,#1}}
\nc\erd[1]{E_{r \delta,#1}}

\nc\ef[1]{F_{#1}}
\nc\efi[1]{F_{\delta - \alpha_{#1}}}
\nc\efsi[1]{F_{s \delta - \alpha_{#1}}}
\nc\efri[1]{F_{r \delta - \alpha_{#1}}}
\nc\efd[2][]{F_{#1 \delta,#2}}
\nc\efkd[1]{F_{k \delta,#1}}
\nc\efmd[1]{F_{m \delta,#1}}
\nc\efrd[1]{F_{r \delta,#1}}

\nc\fa{\frak a}
\nc\fb{\frak b}
\nc\fc{\frak c}
\nc\fd{\frak d}
\nc\fe{\frak e}
\nc\ff{\frak f}
\nc\fg{\frak g}
\nc\fh{\frak h}
\nc\fj{\frak j}
\nc\fk{\frak k}
\nc\fl{\frak l}
\nc\fm{\frak m}
\nc\fn{\frak n}
\nc\fo{\frak o}
\nc\fp{\frak p}
\nc\fq{\frak q}
\nc\fr{\frak r}
\nc\fs{\frak s}
\nc\ft{\frak t}
\nc\fu{\frak u}
\nc\fv{\frak v}
\nc\fz{\frak z}
\nc\fx{\frak x}
\nc\fy{\frak y}

\nc\fA{\frak A}
\nc\fB{\frak B}
\nc\fC{\frak C}
\nc\fD{\frak D}
\nc\fE{\frak E}
\nc\fF{\frak F}
\nc\fG{\frak G}
\nc\fH{\frak H}
\nc\fJ{\frak J}
\nc\fK{\frak K}
\nc\fL{\frak L}
\nc\fM{\frak M}
\nc\fN{\frak N}
\nc\fO{\frak O}
\nc\fP{\frak P}
\nc\fQ{\frak Q}
\nc\fR{\frak R}
\nc\fS{\frak S}
\nc\fT{\frak T}
\nc\fU{\frak U}
\nc\fV{\frak V}
\nc\fZ{\frak Z}
\nc\fX{\frak X}
\nc\fY{\frak Y}
\nc\tfi{\ti{\Phi}}
\nc\bF{\bold F}
\rnc\bol{\bold 1}

\nc\ua{\bold U_\A}

\nc\qinti[1]{[#1]_i}
\nc\q[1]{[#1]_q}
\nc\xpm[2]{E_{#2 \delta \pm \alpha_#1}}  
\nc\xmp[2]{E_{#2 \delta \mp \alpha_#1}}
\nc\xp[2]{E_{#2 \delta + \alpha_{#1}}}
\nc\xm[2]{E_{#2 \delta - \alpha_{#1}}}
\nc\hik{\ed{k}{i}}
\nc\hjl{\ed{l}{j}}
\nc\qcoeff[3]{\left[ \begin{smallmatrix} {#1}& \\ {#2}& \end{smallmatrix}
\negthickspace \right]_{#3}}
\nc\qi{q}
\nc\qj{q}

\nc\ufdm{{_\ca\bu}_{\rm fd}^{\le 0}}


\nc\isom{\cong} 

\nc{\pone}{{\Bbb C}{\Bbb P}^1}
\nc{\pa}{\partial}
\def\H{\mathcal H}
\def\L{\mathcal L}
\nc{\F}{{\mathcal F}}
\nc{\Sym}{{\goth S}}
\nc{\A}{{\mathcal A}}
\nc{\arr}{\rightarrow}
\nc{\larr}{\longrightarrow}

\nc{\ri}{\rangle}
\nc{\lef}{\langle}
\nc{\W}{{\mathcal W}}
\nc{\uqatwoatone}{{U_{q,1}}(\su)}
\nc{\uqtwo}{U_q(\goth{sl}_2)}
\nc{\dij}{\delta_{ij}}
\nc{\divei}{E_{\alpha_i}^{(n)}}
\nc{\divfi}{F_{\alpha_i}^{(n)}}
\nc{\Lzero}{\Lambda_0}
\nc{\Lone}{\Lambda_1}
\nc{\ve}{\varepsilon}
\nc{\bepsilon}{\bar{\epsilon}}
\nc{\bak}{\bar{k}}
\nc{\phioneminusi}{\Phi^{(1-i,i)}}
\nc{\phioneminusistar}{\Phi^{* (1-i,i)}}
\nc{\phii}{\Phi^{(i,1-i)}}
\nc{\Li}{\Lambda_i}
\nc{\Loneminusi}{\Lambda_{1-i}}
\nc{\vtimesz}{v_\ve \otimes z^m}

\nc{\asltwo}{\widehat{\goth{sl}_2}}
\nc\ag{\widehat{\goth{g}}}  
\nc\teb{\tilde E_\boc}
\nc\tebp{\tilde E_{\boc'}}

\newcommand{\LR}{\bar{R}}
\newcommand{\eeq}{\end{equation}}
\newcommand{\ben}{\begin{eqnarray}}
\newcommand{\een}{\end{eqnarray}}

\title[On the positive part of $U_q(\widehat{sl_2})$ of equitable type]{On the second realization for  the positive part \\ of $U_q(\widehat{sl_2})$ of equitable type} 
\author{Pascal Baseilhac}
\address{Institut Denis-Poisson CNRS/UMR 7013 - Universit\'e de Tours - Universit\'e d'Orl\'eans
Parc de Grammont, 37200 Tours, 
FRANCE}
\email{pascal.baseilhac@idpoisson.fr}

\begin{abstract}   The equitable presentation of the quantum algebra $U_q(\widehat{sl_2})$ is considered. This presentation was originally introduced by T. Ito and P. Terwilliger. In this paper, following Terwilliger's recent works the (nonstandard) positive part of $U_q(\widehat{sl_2})$ of equitable type $U_q^{IT,+}$ and its second realization (current algebra) $U_q^{T,+}$ are introduced and studied. A presentation for $U_q^{T,+}$ is given in terms of a K-operator satisfying a Freidel-Maillet type equation and a condition on its quantum determinant. Realizations of the K-operator in terms of Ding-Frenkel L-operators are considered, from which an explicit injective homomorphism from $U_q^{T,+}$ to a subalgebra of Drinfeld's second realization (current algebra) of $U_q(\widehat{sl_2})$ is derived, and the comodule algebra structure of $U_q^{T,+}$ is characterized.  The central extension of $U_q^{T,+}$ and its relation with Drinfeld's second realization of $U_q(\widehat{gl_2})$ is also described using the framework of Freidel-Maillet algebras. 
\end{abstract}

\maketitle

\vskip -0.5cm

{\small MSC:\ 16T25;\ 17B37;\ 81R50.}

{{\small  {\it \bf Keywords}:  $U_q(\widehat{sl_2})$; Equitable presentation; FRT presentation; Freidel-Maillet algebras}}
\vspace{2mm}

\section{Introduction}
Originally introduced in \cite{Jim,Dr0}, the quantum affine algebra $U_q(\widehat{sl_2})$ admits a presentation in terms of generators $\{E_i,F_i,K_i^{\pm 1} 
|i=0,1\}$ and relations.  In the literature, this presentation is usually referred as the {\it Drinfeld-Jimbo} or {\it Chevalley} type presentation of $U_q(\widehat{sl_2})$, denoted $U_q^{DJ}$. V. Drinfeld gave also another presentation \cite{Dr}, the so-called {\it Drinfeld's second realization} of $U_q(\widehat{sl_2})$ in terms  generators $\{{\tx}_k^{\pm}, \tho_{\ell},\tK^{\pm 1}, C^{\pm 1/2}
|k\in {\mathbb Z},\ell\in {\mathbb Z}\backslash
\{0\} \}$ and relations, denoted $U_q^{Dr}$. For further analysis, both $U_q^{DJ}$ and $U_q^{Dr}$ are recalled in Appendix \ref{ap:A}. 
A third presentation, initiated by Reshetikhin-Semenov-Tian-Shansky in \cite{RS} and denoted $U_q^{RS}$, takes the form of a  {\it Faddeev-Reshetikhin-Takhtajan  (FRT)} type presentation \cite{FRT89}.  In this case, generating functions for the generators of $U_q^{Dr}$ (and more generally Drinfeld's second realization of $U_q(\widehat{gl_2})$) are the entries of the so-called L-operators, see \cite{DF93} for details. In these definitions, note that the derivation generator is ommited (see \cite[Remark  2, page 393]{CPb}). In the context of mathematics and physics, the presentation $U_q^{DJ}$ and especially $U_q^{Dr}$, $U_q^{RS}$, have played a crucial role in developments of quantum affine algebras, conformal field theory and integrable lattice systems.\vspace{1mm}

In \cite{IT03}, T. Ito and P. Terwilliger obtained a fourth presentation of $U_q(\widehat{sl_2})$ called `equitable', here denoted $U_q^{IT}$, see Theorem \ref{thm2}. It is generated by $\{y_i^\pm,k_i^\pm|i=0,1\}$ subject to the defining relations (\ref{eq:2buq2})-(\ref{eq:2buq7}). An explicit isomorphism $U_q^{IT} \rightarrow U_q^{DJ}$ is known \cite{IT03}, see (\ref{eq:isol1})-(\ref{eq:isol3}). To our knowledge, the relationship between $U_q^{IT}$, $U_q^{Dr}$ and $U_q^{RS}$ has not been investigated.
As a starting point, in this paper we consider the subalgebra of $U_q^{IT}$ 
generated by $\{y_0^+,y_1^+\}$. We denote this subalgebra by $U_q^{IT,+}$
and call it the (nonstandard) positive part of $U_q^{IT}$.
It is known that $U_q^{IT,+}$ has a presentation by generators $\{y_0^+,y_1^+\}$
subject to the q-Serre relations; see (\ref{eq:2buq7}). In a recent work \cite{Ter19}, P. Terwilliger gave another realization - called `alternating' - for an algebra with q-Serre defining relations.
Adapting the results and notations of \cite{Ter19} to $U_q^{IT,+}$, we introduce {\it Terwilliger's second realization of $U_q^{IT,+}$}, denoted $U_q^{T,+}$. It has equitable generators $\{y_{-k}^+,y_{k+1}^+,z^+_{k+1},\tz^+_{k+1}|{k\in {\mathbb N}}\}$ subject to a set of relations displayed in Theorem \ref{thm:Tp}. A PBW basis for $U_q^{T,+}$ is given in Proposition \ref{pbw}. For completeness, following \cite{Ter19b}  the central extension of $U_q^{T,+}$, denoted ${\cal U}_q^{T,+}$, is considered in the last section. See Definitions \ref{ext1}, \ref{ext2}.
\vspace{1mm}

The purpose of this paper is to study the relationship
between $U_q^{T,+}$, its central extension ${\cal U}_q^{T,+}$ and certain subalgebras of  $U_q^{RS}$, $U_q^{Dr}$ and $U_q(\widehat{gl_2})$'s counterparts.
The main result is a Freidel-Maillet type presentation \cite{FM91}  for $U_q^{T,+}$, see Theorem \ref{thm1}. In this presentation, the generators of $U_q^{T,+}$ arise  as coefficients of generating functions characterizing the entries of a K-operator that satisfies a Freidel-Maillet type equation and a quantum determinant equation.
A K-operator that reads as a quadratic combination of L-operators of $U_q^{RS}$ (known in the literature as Sklyanin's dressed operators \cite{Skly88}) is derived, see Lemma \ref{lem:ktmz} and (\ref{Kmu}).  Using this relation between K and L-operators, the following results are obtained in a straightforward manner: generating functions of equitable generators of $U_q^{T,+}$ and  Drinfeld generators of $U_q^{Dr}$ are related through the map $\nu$, see Proposition \ref{map1} and Example \ref{exi1}; $U_q^{T,+}$ is interpreted as a comodule algebra. See Proposition \ref{prop:delta} and Lemma \ref{coprodform}. Relaxing the condition on the quantum determinant, the central extension ${\cal U}_q^{T,+}$ is studied along the same lines using a Freidel-Maillet type presentation. 
 See Theorem \ref{thm3}, Propositions \ref{propqdet}, \ref{prop:YZmap} and Corollary \ref{cor:Cmap}. \vspace{1mm} 

The text is organized as follows. In Section \ref{sec:eq}, the equitable presentation $U_q^{IT}$, its nonstandard positive part $U_q^{IT,+}$ and Terwilliger's second realization $U_q^{T,+}$ are introduced. Then, following recent results \cite{Bas20} a Freidel-Maillet type presentation for $U_q^{T,+}$ is proposed. In Section \ref{sec:FM}, the analysis of \cite[Subsection 5.2]{Bas20} is extended:
K-operator solutions of a Freidel-Maillet type equation are constructed, from which an injective homomorphism $\nu: U_q^{T,+} \rightarrow {U'_q}^{Dr,\triangleright,+}$ is derived, where ${U'_q}^{Dr,\triangleright,+}$ is a subalgebra for $U_q^{Dr}$.
Using the Freidel-Maillet type presentation, it is also shown 
 that $U_q^{T,+}$ admits a (left) comodule algebra structure $\delta: U_q^{T,+} \rightarrow {U'_q}^{Dr,\triangleright,+} \otimes U_q^{T,+}$. For the specialization $C=1$, the image of the equitable generators by the corresponding (left) coaction map is given. In Section \ref{secextU}, for completeness a Freidel-Maillet type presentation for ${\cal U}_q^{T,+}$ is given. An injective homomorphism  $\mu: {\cal U}_q^{T,+} \rightarrow {U'_q}(\widehat{gl_2})^{\triangleright,+}$ is derived, where ${U'_q}(\widehat{gl_2})^{\triangleright,+}$ is a subalgebra of $U_q(\widehat{gl_2})$. In particular, in terms of the Drinfeld's generators of $U_q(\widehat{gl_2})$ the image of the quantum determinant by $\mu$ enjoys a simple factorized structure, see (\ref{Cmap}) or (\ref{Cmapbis}). In the last section, the results here presented together with \cite{Bas20} are summarized and some perspectives are given.
\vspace{1mm}

{\bf Nota bene.} In a recent paper \cite{Ter21b}, the relationship between the positive part of $U_q(\widehat{sl_2})$ denoted $U_q^+$ (see comments around eq. (\ref{defAB})) and its central extension ${\cal U}_q^+$ is studied in details using the framework of generating functions. Explicit relations between generating functions in terms of Damiani's root vectors for $U_q^+$ and generating functions for the alternating generators of ${\cal U}_q^+$ are obtained. For the choice $\bar\epsilon_\pm=0$, fixing $\bar k_\pm$ and $\lambda$ according to the normalizations chosen in \cite{Ter21b} and using Beck's correspondence between Drinfeld's generators and root vectors \cite{Beck} (see (\ref{imr1})-(\ref{imr2})), it can be readily checked that the expressions given in Proposition  \ref{prop:YZmap} with (\ref{im1})-(\ref{im4}), and eq. (\ref{Cmapbis}) match with the expressions given in \cite[Propositions 9.1, 9.3]{Ter21b} and \cite[eq. (65)]{Ter21b}, respectively. \vspace{1mm}

{\bf Notation.}
{\it
Recall the natural numbers ${\mathbb N} = \{0, 1, 2, \cdots\}$ and integers ${\mathbb Z} = \{0, \pm 1, \pm 2, \cdots\}$.
${\mathbb C}(q)$ denotes the field of rational functions in an indeterminate $q$. The $q$-commutator $\big[X,Y\big]_q=qXY-q^{-1}YX$ is introduced. We denote $[x]= (q^x-q^{-x})/(q-q^{-1})$.  } \vspace{1mm}

\section{The equitable subalgebra $U_q^{IT,+}$ and second realization $U_q^{T,+}$}\label{sec:eq}
In this section, the equitable presentation of $U_q(\widehat{sl_2})$ introduced in \cite{IT03} is recalled, and an isomorphism $U_q^{IT} \rightarrow U_q^{DJ}$ is displayed. Then, the positive part $U_q^{IT,+}$ is considered. For its second realization $U_q^{T,+}$ recently introduced in \cite{Ter18}, some properties are recalled. Following \cite{Bas20} a Freidel-Maillet type presentation is given for $U_q^{T,+}$.    
\begin{thm}
\label{thm2} \cite{IT03} 
The quantum affine algebra $U_q(\widehat{sl_2})$ is isomorphic to
the unital associative $\mathbb C(q)$-algebra with {\it equitable} generators $\{y^{\pm}_i,k_i^{{\pm}1}|i=0,1\}$
and the following relations:
\begin{eqnarray}
k_ik^{-1}_i = 
k^{-1}_ik_i &=&  1,\qquad k_0k_1 \ \ \mbox{central},
\label{eq:2buq2}
\\
\frac{\big[y^+_i,k_i\big]_q}{q-q^{-1}} &=& 1,
\qquad
\frac{\big[k_i,y^-_i\big]_q }{q-q^{-1}} = 1,
\qquad
\frac{\big[y^-_i,y^+_i\big]_q}{q-q^{-1}} = 1,
\qquad
\frac{\big[y^+_i,y^-_j\big]_q}{q-q^{-1}} = k^{-1}_0k^{-1}_1,
\qquad i\not=j,
\label{eq:2buq6}
\end{eqnarray}
\begin{eqnarray}
(y^{\pm}_i)^3y^{\pm}_j -  
\lbrack 3 \rbrack_q (y^{\pm}_i)^2y^{\pm}_j y^{\pm}_i 
+\lbrack 3 \rbrack_q y^{\pm}_iy^{\pm}_j (y^{\pm}_i)^2 - 
y^{\pm}_j (y^{\pm}_i)^3 =0, \qquad i\not=j.
\label{eq:2buq7}
\end{eqnarray}
\end{thm}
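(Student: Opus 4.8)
The plan is to prove the statement by exhibiting an explicit pair of mutually inverse algebra homomorphisms between $U_q^{DJ}$, the Drinfeld--Jimbo presentation recalled in Appendix~\ref{ap:A} that I take as the definition of $U_q(\widehat{sl_2})$, and the algebra $\widetilde U$ presented by the equitable generators $\{y_i^\pm,k_i^{\pm1}\}$ subject to (\ref{eq:2buq2})--(\ref{eq:2buq7}). First I would define $\varphi\colon\widetilde U\to U_q^{DJ}$ on generators by the formulas (\ref{eq:isol1})--(\ref{eq:isol3}); up to normalization these have the shape $k_i\mapsto K_i$, $y_i^+\mapsto K_i^{-1}+\beta\,E_iK_i^{-1}$ and $y_i^-\mapsto K_i^{-1}+\gamma\,F_i$, with $\beta,\gamma\in\mathbb{C}(q)$ constrained by $\beta\gamma\,q=-(q-q^{-1})^2$. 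This particular dressing of the Chevalley root vectors by powers of $K_i$ is forced: the leading Cartan part of each equitable relation dictates the exponent, and the product $\beta\gamma$ is pinned down by the one relation in (\ref{eq:2buq6}) that mixes $E_i$ and $F_i$.

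The first block of work is to verify that $\varphi$ respects the defining relations. Relations (\ref{eq:2buq2}) are clear because $K_0K_1$ is central in $U_q^{DJ}$. For the quadratic relations (\ref{eq:2buq6}) I would expand each $q$-commutator and normal-order using $K_iE_iK_i^{-1}=q^2E_i$, $K_iF_iK_i^{-1}=q^{-2}F_i$ and $[E_i,F_i]=(K_i-K_i^{-1})/(q-q^{-1})$. In every case the leading contribution, coming from the product of the $K^{-1}$ summands, reproduces the right-hand side (the scalar $1$, or $K_0^{-1}K_1^{-1}=\varphi(k_0^{-1}k_1^{-1})$ for the $i\neq j$ relation); the $E$--$K$ and $F$--$K$ cross terms cancel because their $q$-weights are balanced; and the one surviving genuinely quadratic term is exactly absorbed by the constraint $\beta\gamma\,q=-(q-q^{-1})^2$, collapsing the leftover $K_i^{-2}$ into the scalar $1$. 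For the relation with $i\neq j$ one uses in addition that $E_i$ and $F_j$ commute.

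The main obstacle is the $q$-Serre relations (\ref{eq:2buq7}). Writing $y_i^+=(1+\beta E_i)K_i^{-1}$ and substituting into $(y_i^+)^3y_j^+-[3]_q(y_i^+)^2y_j^+y_i^++[3]_q y_i^+y_j^+(y_i^+)^2-y_j^+(y_i^+)^3$ produces, after moving all the $K$'s to the right, a sum of terms of mixed degree in $E_i,E_j$, because the inhomogeneous summand $1$ destroys the grading that makes the Chevalley $q$-Serre relation transparent. I would organize the verification by total $E$-degree: the top-degree part equals, up to an invertible monomial in the $K$'s, the Chevalley relation $E_i^3E_j-[3]_qE_i^2E_jE_i+[3]_qE_iE_jE_i^2-E_jE_i^3=0$ and so vanishes, provided one checks that commuting the $K_i^{-1}$ past the $E$'s yields the same $q$-weight in all four monomials so that the $[3]_q$ coefficients line up; the lower-degree parts must then be shown to cancel among themselves, which reduces to the numerical identities satisfied by $[3]_q$. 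The identical computation with $F_i$ in place of $E_i$ settles the $y^-$ Serre relation. This weight-bookkeeping across the four Serre monomials is the delicate heart of the argument.

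Finally I would build the inverse $\psi\colon U_q^{DJ}\to\widetilde U$ by solving the defining formulas, i.e. $K_i\mapsto k_i$, $E_i\mapsto\beta^{-1}(y_i^+k_i-1)$ and $F_i\mapsto\gamma^{-1}(y_i^--k_i^{-1})$, and check that $\psi$ respects the Drinfeld--Jimbo relations: the Cartan and $[E_i,F_j]$ relations reduce to (\ref{eq:2buq6}) by the same $q$-commutator bookkeeping, while the Chevalley $q$-Serre relations reduce to (\ref{eq:2buq7}). Since $\varphi\circ\psi$ and $\psi\circ\varphi$ act as the identity on the respective generators, $\varphi$ and $\psi$ are mutually inverse isomorphisms. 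As a consistency check one may compare PBW bases on the two sides, which simultaneously confirms that the relations (\ref{eq:2buq2})--(\ref{eq:2buq7}) are complete and that no relation has been overlooked.
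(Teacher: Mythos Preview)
Your proposal is sound, and it is essentially the standard route: exhibit the explicit map on generators (the paper records it as (\ref{eq:isol1})--(\ref{eq:isol3})), verify by direct $q$-commutator bookkeeping that the equitable relations are sent to consequences of the Drinfeld--Jimbo relations, then invert on generators and check the reverse direction. Your identification of the constraint $\beta\gamma q=-(q-q^{-1})^2$ from the relation $[y_i^-,y_i^+]_q=q-q^{-1}$ is correct, and your degree-by-degree strategy for the $q$-Serre relations works (the top-degree piece reproduces the Chevalley $q$-Serre relation after the $K$-weights are aligned; the lower-degree pieces cancel by the same $q$-binomial identities that make $1,[3]_q,[3]_q,1$ the alternating row of Gaussian binomials).

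There is nothing to compare against in the present paper: the theorem is quoted from \cite{IT03} and stated without proof, the paper only recording the isomorphism (\ref{eq:isol1})--(\ref{eq:isol3}) and using it downstream. Your sketch is precisely the argument one finds in \cite{IT03}, so you are not taking a different route but rather supplying the details the paper omits by citation. One minor remark: your closing sentence about ``comparing PBW bases on the two sides'' as a consistency check is not needed for the isomorphism (mutual inverses on generators already suffice), and making it rigorous would require an independent PBW theorem for the equitable presentation, which is more work than the isomorphism itself; it is cleaner to omit it.
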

We call $U_q^{IT}$ the {\it Ito-Terwilliger} or {\it equitable} presentation of $U_q(\widehat{sl_2})$.\vspace{1mm}

An isomorphism $U_q^{IT}\rightarrow U_q^{DJ}$ is given in \cite{IT03}, where $U_q^{DJ}$ is the Drinfeld-Jimbo presentation of  $U_q(\widehat{sl_2})$ recalled in Appendix  \ref{ap:A}. Namely, 
\begin{eqnarray}
k^{\pm 1}_i &\mapsto & K^{\pm 1}_i\ ,\label{eq:isol1}\\
y^-_i &\mapsto &  K^{-1}_i+(q-q^{-1})F_i\ ,\label{eq:isol2} \\
y^+_i &\mapsto &  K^{-1}_i-q(q-q^{-1})K^{-1}_i E_i\ .\label{eq:isol3}
\end{eqnarray}
In this paper, we focus on the following subalgebra.  
\begin{defn} 
 $U_q^{IT,+}$ is the subalgebra of $U_q^{IT}$ generated by $\{y^+_0,y^+_1\}$.  We call $U_q^{IT,+}$ the positive part of $U_q(\widehat{sl_2})$ of equitable type.
\end{defn}
By (\ref{eq:2buq7}), this subalgebra has a presentation by generators
$\{y^+_0,y^+_1\}$ subject to the q-Serre relations.
%
%
Let $U_q^{DJ,+}$ (resp. $U_q^{DJ,-}$) denote the subalgebra of $U_q^{DJ}$ generated by $E_0,E_1$ (resp. $F_0,F_1$); See Appendix \ref{ap:A}. In the literature, $U_q^{DJ,+}$ (resp. $U_q^{DJ,-}$) is usually called the positive (resp. negative) part of $U_q(\widehat{sl_2})$. For this reason, the definition above of positive part of $U_q(\widehat{sl_2})$ is nonstandard. The negative part of $U_q(\widehat{sl_2})$ of equitable type - denoted by $U_q^{IT,-}$ - can be introduced similarly. It is generated by $\{y_0^-,y_1^-\}$. Another subalgebra is the  `Cartan part' denoted $U_q^{IT,0}$, generated by $\{k^{\pm 1}_0,k^{\pm 1}_1\}$. 
%
\begin{defn} ${U'_q}^{DJ,+}$ (resp. ${U'_q}^{DJ,-}$) denotes the subalgebra of $U_q^{DJ}$  generated by $U_q^{DJ,+}$  (resp. $U_q^{DJ,-}$) and $\{K^{\pm 1}_0,K^{\pm 1}_1\}$. 
\end{defn}
By (\ref{defUqDJp}), (\ref{eq:2buq7}), it follows:
\begin{rem}\label{prop:UqITDJ} An injective homomorphism $U_q^{IT,+}\rightarrow {U'}_q^{DJ,+}$ is given by (\ref{eq:isol3}).
\end{rem}

From the point of view of generators and relations, $U_q^{DJ,+}$ and $U_q^{IT,+}$ are exactly the same, up to isomorphism. However, it is seen that their embeddings into  ${U_q}^{DJ}$ essentially differ. To avoid any confusion in further discussions, let us introduce the algebra $U_q^+$ with fundamental generators $A,B$ and q-Serre defining relations:
\beqa
&& \big[A, \big[A, \big[A,B \big]_{q} \big]_{q^{-1}} \big]=0\ ,\quad
 \big[B, \big[B, \big[B,A \big]_{q} \big]_{q^{-1}} \big]= 0\ . \label{defAB}
\eeqa
According to previous definitions,
\begin{lem}\label{lem:1} There exists an algebra isomorphism $U_q^+ \rightarrow  U_q^{DJ,+}$ that sends $A\mapsto E_0$ and $B\mapsto E_1$.
\end{lem}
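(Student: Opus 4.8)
The plan is to exhibit both $U_q^+$ and $U_q^{DJ,+}$ as quotients of the free algebra on two generators by the same relations, and to extract the isomorphism from a universal property. First I would check that the images $E_0,E_1$ satisfy the defining relations of $U_q^+$; granting this, the presentation of $U_q^+$ by generators $A,B$ and relations (\ref{defAB}) yields a unique algebra homomorphism $\phi\colon U_q^+\to U_q^{DJ,+}$ with $\phi(A)=E_0$ and $\phi(B)=E_1$. Since $E_0,E_1$ generate $U_q^{DJ,+}$ by definition, $\phi$ is automatically surjective, so only injectivity will remain to be settled.

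To verify that $E_0,E_1$ satisfy (\ref{defAB}), I would expand the nested $q$-commutators and compare with the $q$-Serre relations (\ref{eq:2buq7}). A direct computation gives
\[
[A,[A,B]_q]_{q^{-1}} = A^2B-(q^2+q^{-2})ABA+BA^2,
\]
and applying the outer (ordinary) commutator $[A,-]$ yields
\[
[A,[A,[A,B]_q]_{q^{-1}}] = A^3B-[3]_q A^2BA+[3]_q ABA^2-BA^3,
\]
where $[3]_q=q^2+1+q^{-2}$; the second relation of (\ref{defAB}) is obtained by exchanging $A\leftrightarrow B$. Thus (\ref{defAB}) is literally the pair of $q$-Serre relations (\ref{eq:2buq7}) with $(y_i^+,y_j^+)$ replaced by $(A,B)$. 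Since these relations hold for $E_0,E_1$ inside $U_q^{DJ}$ (see (\ref{defUqDJp})), the homomorphism $\phi$ is well defined.

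The main obstacle is injectivity, because $U_q^+$ carries no relations beyond (\ref{defAB}): I must show that $U_q^{DJ,+}$ likewise admits no relations among $E_0,E_1$ other than the $q$-Serre relations, i.e.\ that these relations form a complete presentation of the positive part. This is where I would invoke the standard structure theory of quantum affine algebras: by the triangular decomposition of $U_q(\widehat{sl_2})$ together with the known PBW/monomial basis of its positive part, the subalgebra generated by $E_0,E_1$ is isomorphic to the $\mathbb{C}(q)$-algebra on two generators modulo the $q$-Serre relations. Equivalently, one produces in $U_q^+$ a spanning set of $q$-Serre-reduced monomials and matches it, via $\phi$, with a PBW basis of $U_q^{DJ,+}$, forcing $\ker\phi=0$. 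Combining surjectivity with injectivity gives the desired isomorphism $U_q^+\to U_q^{DJ,+}$ sending $A\mapsto E_0$ and $B\mapsto E_1$, as claimed.
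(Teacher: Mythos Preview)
Your argument is correct and matches the paper's (implicit) reasoning: the paper offers no explicit proof of this lemma, simply prefacing it with ``According to previous definitions,'' since $U_q^+$ is \emph{defined} by the $q$-Serre relations and the positive part $U_q^{DJ,+}$ is well known to have precisely the $q$-Serre relations (\ref{defUqDJp}) as a complete presentation. You have unpacked this, correctly identifying that the only substantive point is injectivity and that it rests on the standard structure theory (triangular decomposition/PBW basis) of $U_q(\widehat{sl_2})$.
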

\begin{lem}\label{lem:2} There exists an algebra isomorphism $U_q^+ \rightarrow  U_q^{IT,+}$ that sends $A\mapsto y^+_0$ and $B\mapsto y_1^+$.
\end{lem}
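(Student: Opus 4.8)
The plan is to prove the isomorphism by matching presentations. By definition, $U_q^+$ is the unital associative $\mathbb{C}(q)$-algebra on generators $A,B$ subject to the two nested-commutator q-Serre relations \er{defAB}. On the other hand, as recorded just after the definition of $U_q^{IT,+}$, that subalgebra is presented by generators $y_0^+,y_1^+$ subject to the q-Serre relations \er{eq:2buq7} (taken with the $+$ sign and $i\in\{0,1\}$). It therefore suffices to verify that, under the correspondence $A\leftrightarrow y_0^+$, $B\leftrightarrow y_1^+$, the defining relations of the two algebras coincide. Granting this, the assignment $A\mapsto y_0^+$, $B\mapsto y_1^+$ extends to an algebra homomorphism $U_q^+\rightarrow U_q^{IT,+}$ by the universal property of a presentation, the inverse map $y_0^+\mapsto A$, $y_1^+\mapsto B$ is constructed symmetrically, and the two compositions are identities because they fix the generators.

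The only computation required is to rewrite \er{defAB} in expanded form. Using $[X,Y]_q=qXY-q^{-1}YX$ for the inner brackets and the ordinary commutator for the outer one, a direct expansion gives
\beqa
\big[A,\big[A,\big[A,B\big]_q\big]_{q^{-1}}\big] &=& A^3B-[3]_q A^2BA+[3]_q ABA^2-BA^3,
\eeqa
the middle coefficients collecting the scalars into $1+q^2+q^{-2}=[3]_q$. Under $A\mapsto y_0^+$, $B\mapsto y_1^+$ the right-hand side is precisely the instance $i=0$, $j=1$ of \er{eq:2buq7}; expanding the second relation of \er{defAB} the same way and sending $B\mapsto y_1^+$, $A\mapsto y_0^+$ reproduces the instance $i=1$, $j=0$. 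Hence the two relation sets agree term by term, and the homomorphism and its inverse are both well defined.

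Surjectivity of the constructed map is immediate, since $y_0^+,y_1^+$ generate $U_q^{IT,+}$, and well-definedness is the bracket bookkeeping just described. I expect the substantive point to be \emph{injectivity}, which is hidden in the invocation of the presentation of $U_q^{IT,+}$: it is the assertion that $y_0^+$ and $y_1^+$ satisfy no relations beyond the q-Serre relations. This is exactly the cited presentation result for $U_q^{IT,+}$, and it may be traced back through the isomorphism $U_q^{IT}\cong U_q^{DJ}$ of \er{eq:isol1}--\er{eq:isol3} together with the classical fact (\lemref{lem:1}) that $U_q^{DJ,+}$ is presented by generators subject to the q-Serre relations. I therefore regard this presentation fact as the main conceptual input, with the remainder of the argument being the routine expansion above.
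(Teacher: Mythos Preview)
Your proposal is correct and follows the same approach as the paper, which treats \lemref{lem:2} as an immediate consequence of the stated presentation of $U_q^{IT,+}$ by $\{y_0^+,y_1^+\}$ subject to the q-Serre relations \er{eq:2buq7}. The paper gives no explicit proof; your expansion verifying that \er{defAB} and \er{eq:2buq7} agree, together with your remark that injectivity rests on the cited presentation of $U_q^{IT,+}$, simply unpacks what the paper leaves implicit.
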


For $U_q^+$, Terwilliger recently gave a new presentation called {\it alternating}. The alternating presentation consists of infinitly many countable alternating elements called the alternating generators satisfying certain relations \cite{Ter18,Ter19}.  \vspace{1mm} 

For $A\mapsto E_0$ and $B\mapsto E_1$, the alternating presentation produces a new `current' realization for $U_q^{DJ,+}$ besides the known one in terms of Drinfeld generators and relations \cite{Beck}. In this case, an explicit isomorphism between Terwilliger's alternating algebra and certain alternating subalgebras of $U_q^{Dr}$ is established in \cite{Bas20}. For the precise relation between the alternating and Drinfeld's generators, see \cite[Subsection 5.2.3]{Bas20}. Using the correspondence (\ref{isoCP}), in particular one finds $A \mapsto \tx_1^-\tK^{-1}$ and $B\mapsto\tx_0^+$.\vspace{1mm}

For $A\mapsto y^+_0$ and $B\mapsto y_1^+$, the alternating presentation produces similarly a new realization
for $U_q^{IT,+}$. 
\begin{thm}
\label{thm:Tp} (see \cite{Ter19}) 
$U_q^{IT,+}$  is isomorphic to the unital associative $\mathbb C(q)$-algebra with {\it equitable} generators $\{y_{-k}^+,y_{k+1}^+,z_{k+1}^+,\tz_{k+1}^+|k\in {\mathbb N}\}$ subject to the following relations 
\begin{align}
&
 \lbrack  {y}^+_1,  {y}^+_{-k}\rbrack= 
\lbrack  {y}^+_{k+1},  {y}^+_0\rbrack=
\frac{({z}^+_{k+1} -  \tilde{z}^+_{k+1})}{q+q^{-1}},
\label{def1}
\\
&
\lbrack  {y}^+_1,  \tilde{z}^+_{k+1}\rbrack_q= 
\lbrack {z}^+_{k+1},  {y}^+_1\rbrack_q= 
 \bar\rho{y}^+_{k+2},
\label{def2}
\\
&
\lbrack \tilde{z}^+_{k+1},  {y}^+_0\rbrack_q= 
\lbrack  {y}^+_0, {z}^+_{k+1}\rbrack_q= 
\bar\rho {y}^+_{-k-1},
\label{def3}
\\
&
\lbrack  {y}^+_{k+1}, {y}^+_{\ell+1}\rbrack=0,  \qquad 
\lbrack  {y}^+_{-k}, {y}^+_{-\ell}\rbrack= 0,
\label{def4}
\\
&
\lbrack  {y}^+_{k+1}, {y}^+_{-\ell}\rbrack+
\lbrack {y}^+_{-k}, {y}^+_{\ell+1}\rbrack= 0,
\label{def5}
\\
&
\lbrack  {y}^+_{k+1},  \tilde{z}^+_{\ell+1}\rbrack+
\lbrack \tilde{z}^+_{k+1}, {y}^+_{\ell+1}\rbrack= 0,
\label{def6}
\\
&
\lbrack {y}^+_{k+1},  {z}^+_{\ell+1}\rbrack+
\lbrack  {z}^+_{k+1}, {y}^+_{\ell+1}\rbrack= 0,
\label{def7}
\\
&
\lbrack  {y}^+_{-k},  \tilde{z}^+_{\ell+1}\rbrack+
\lbrack   \tilde{z}^+_{k+1},{y}^+_{-\ell}\rbrack= 0,
\label{def8}
\\
&
\lbrack  {y}^+_{-k},  {z}^+_{\ell+1}\rbrack+
\lbrack  {z}^+_{k+1}, {y}^+_{-\ell}\rbrack= 0,
\label{def9}
\\
&
\lbrack  \tilde{z}^+_{k+1},  \tilde{z}^+_{\ell+1}\rbrack=0,
\qquad 
\lbrack {z}^+_{k+1},  {z}^+_{\ell+1}\rbrack= 0,
\label{def10}
\\
&
\lbrack {z}^+_{k+1},  \tilde{z}^+_{\ell+1}\rbrack+
\lbrack  \tilde{z}^+_{k+1},  {z}^+_{\ell+1}\rbrack= 0\ ,
\label{def11}
\end{align}
and the condition ($z^+_0=\tilde{z}^+_0=\bar\rho/(q-q^{-1})$):
\beqa
\bar\rho(q+q^{-1}) \sum_{k=0}^{n} q^{-n+2k} y_{k+1}^+y_{-n+k}^+ - \sum_{k=0}^{n+1}q^{2k-n-1} z_k^+\tilde{z}_{n+1-k}^+ =0\ , \quad n\geq 0\ ,\label{condeq}
\eeqa
with 
\beqa
\bar\rho = q^{-1}(q^2-q^{-2})^2 \ .\label{def:rho}
\eeqa

\end{thm}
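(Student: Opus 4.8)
The statement is Terwilliger's, and I would prove it by transporting his alternating presentation of $U_q^+$ across the isomorphism of \lemref{lem:2}. The plan is to recall from \cite{Ter18,Ter19} that the abstract algebra $U_q^+$ with fundamental generators $A,B$ and $q$-Serre relations (\ref{defAB}) carries a distinguished family of \emph{alternating} elements $W_{-k},W_{k+1},G_{k+1},\tilde{G}_{k+1}$ ($k\in{\mathbb N}$), normalized so that $W_0=A$ and $W_1=B$ and defined recursively from $A,B$. Terwilliger shows these generate $U_q^+$ and satisfy relations of exactly the shape (\ref{def1})--(\ref{def11}); the extra quadratic identity (\ref{condeq}) records that, inside $U_q^+$ itself (as opposed to its alternating central extension, where (\ref{def1})--(\ref{def11}) alone are the defining relations), the alternating elements are tied together by a single ``quantum determinant'' relation.

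Next, let $\Phi:U_q^+\to U_q^{IT,+}$ be the isomorphism of \lemref{lem:2}, with $\Phi(A)=y_0^+$ and $\Phi(B)=y_1^+$. I would define the equitable generators as the suitably rescaled images
\[
 y_{-k}^+=\Phi(W_{-k}),\qquad y_{k+1}^+=\Phi(W_{k+1}),\qquad z_{k+1}^+=c\,\Phi(G_{k+1}),\qquad \tilde{z}_{k+1}^+=c\,\Phi(\tilde{G}_{k+1}),
\]
with $c\in{\mathbb C}(q)$ a global normalization of the $G,\tilde G$ family chosen to bring Terwilliger's structure constant into the form $\bar\rho=q^{-1}(q^2-q^{-2})^2$ of (\ref{def:rho}). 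Because $\Phi$ is an algebra isomorphism, $y_0^+,y_1^+$ come out as the standard generators, and the relations (\ref{def1})--(\ref{def11}) and (\ref{condeq}) are precisely the $\Phi$-images of Terwilliger's relations; this simultaneously shows the listed elements generate $U_q^{IT,+}$ and satisfy the asserted relations.

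The real content is \emph{completeness}. Writing $U_q^{T,+}$ for the abstract ${\mathbb C}(q)$-algebra presented by the four families of symbols subject to (\ref{def1})--(\ref{def11}) and (\ref{condeq}), the assignment above gives a surjection $U_q^{T,+}\to U_q^{IT,+}$, and one must show it is injective. I would factor it through the alternating central extension: the relations (\ref{def1})--(\ref{def11}) alone present that extension, which admits an explicit PBW basis, and (\ref{condeq}) generates exactly the kernel of its surjection onto $U_q^+$. Pulling this structure back through $\Phi$ yields a PBW-type spanning set of $U_q^{T,+}$ whose images are linearly independent in $U_q^{IT,+}$; the resulting basis is the one recorded in \propref{pbw}, and injectivity follows. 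The main obstacle is bookkeeping rather than conceptual: fixing the normalization $c$ so that (\ref{def1}), (\ref{def2})--(\ref{def3}) and (\ref{condeq}) acquire simultaneously the stated coefficients $1/(q+q^{-1})$ and $\bar\rho$, and confirming that the signs and $q$-gradings of Terwilliger's relations reproduce (\ref{def5})--(\ref{def11}) verbatim under $\Phi$.
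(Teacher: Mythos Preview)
Your approach is exactly what the paper does: it does not prove the theorem at all but refers entirely to \cite{Ter19,Ter19b}, giving only the dictionary between Terwilliger's alternating generators and the equitable ones, together with the rescaling constant. One point of bookkeeping to watch: the paper's dictionary swaps the two $W$-families relative to your assignment, namely $y^+_{k+1}\leftrightarrow W_{-k}$ and $y^+_{-k}\leftrightarrow W_{k+1}$ (with $c=q^{-1}(q^2-q^{-2})$ for the $G$'s), so if you carry out the verification with $y^+_{-k}=\Phi(W_{-k})$ and $y^+_{k+1}=\Phi(W_{k+1})$ as written you will find the roles of $y^+_0,y^+_1$ and of $z^+,\tilde z^+$ interchanged in (\ref{def1})--(\ref{def3}); this is harmless because of the automorphism $\sigma$ in (\ref{sig}), but it is precisely the normalization issue you flagged.
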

This algebra is denoted $U_q^{T,+}$. We call $U_q^{T,+}$  {\it Terwilliger's second realization} of $U_q^{IT,+}$.
For a proof of the above Theorem, we refer the reader to \cite{Ter19,Ter19b} for all details. Compared with the conventions in \cite{Ter19,Ter19b}, the following substitutions are considered:
\beqa
&&y^+_{k+1} \rightarrow W_{-k}\ , \quad y^+_{-k} \rightarrow W_{k+1}\ , \nonumber\\
&&z^+_{k+1} \rightarrow q^{-1}(q^2-q^{-2})G_{k+1}\ , \quad  \tilde{z}^+_{k+1} \rightarrow q^{-1}(q^2-q^{-2}){\tilde G}_{k+1}\ ,\nonumber\\
&&\bar\rho \rightarrow q^{-1}(q^2-q^{-2})(q-q^{-1})\ .\nonumber
\eeqa
\begin{rem}
The relations (\ref{def1})-(\ref{def11}) coincide with the defining relations for the alternating central extension of $U_q^+$, denoted ${\cal U}_q^+$, see \cite[Definition 3.1]{Ter19b}. To get $U_q^{T,+}$ from ${\cal U}_q^+$, the additional relation (\ref{condeq}) is asserted, see \cite[Lemma 2.8]{Ter19b}.  
 \end{rem}

Note that there exists an automorphism $\sigma$ and an antiautomorphism $S$ (see \cite[Proposition 5.3]{Ter19})  such that:
\beqa
\sigma: && {y}^+_{-k}\mapsto {y}^+_{k+1}\ ,\quad {y}^+_{k+1}\mapsto {y}^+_{-k}\ ,\quad {z}^+_{k+1}\mapsto \tilde{z}^+_{k+1}\ ,\quad \tilde{z}^+_{k+1}\mapsto {z}^+_{k+1}\ ,\label{sig}\\
S: && {y}^+_{-k}\mapsto {y}^+_{-k}\ ,\quad {y}^+_{k+1}\mapsto {y}^+_{k+1}\ ,\quad {z}^+_{k+1}\mapsto \tilde{z}^+_{k+1}\ ,\quad \tilde{z}^+_{k+1}\mapsto {z}^+_{k+1}\ .\label{autS}
\eeqa
\vspace{1mm}

The following proposition is a straightforward adaptation of \cite[Theorem 10.2]{Ter19b}.
\begin{prop}\label{pbw}   A PBW basis for $U_q^{T,+}$ is obtained by its equitable generators
\beqa
\{y^+_{-k}\}_{k\in {\mathbb N}}\ ,\quad \{z^+_{n+1}\}_{n\in {\mathbb N}}\ ,\quad \{y^+_{\ell+1}\}_{\ell\in {\mathbb N}}  \nonumber
\eeqa
in any linear order $<$  that satisfies 
\beqa
&&y^+_{-k}<   z^+_{n+1} < y^+_{\ell+1}\ ,\qquad k,\ell,n \in {\mathbb N}\ .\nonumber
\eeqa
\end{prop}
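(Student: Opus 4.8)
The plan is to establish the PBW property for $U_q^{T,+}$ by transporting the corresponding result for the alternating central extension ${\cal U}_q^+$, which is \cite[Theorem 10.2]{Ter19b}, through the quotient relation (\ref{condeq}). By the Remark following Theorem~\ref{thm:Tp}, the generators $\{y^+_{-k},y^+_{k+1},z^+_{k+1},\tilde z^+_{k+1}\}$ with relations (\ref{def1})--(\ref{def11}) present precisely ${\cal U}_q^+$, and $U_q^{T,+}$ is obtained by further imposing (\ref{condeq}). So the first step is to recall the known PBW basis for ${\cal U}_q^+$ from \cite{Ter19b} and identify, under the substitution dictionary $y^+_{k+1}\to W_{-k}$, $y^+_{-k}\to W_{k+1}$, $z^+_{k+1}\to q^{-1}(q^2-q^{-2})G_{k+1}$, $\tilde z^+_{k+1}\to q^{-1}(q^2-q^{-2})\tilde G_{k+1}$, how the proposed ordering $y^+_{-k}<z^+_{n+1}<y^+_{\ell+1}$ corresponds to Terwilliger's ordering on the alternating generators.

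**The key observation** is that the relation (\ref{condeq}) is exactly what lets one eliminate the generators $\tilde z^+_{n+1}$ in favor of the remaining ones: solving (\ref{condeq}) for $n\geq 0$ recursively expresses each $\tilde z^+_{n+1}$ as a polynomial in $\{y^+_{-k},y^+_{\ell+1},z^+_{k+1}\}$ (using $z^+_0=\tilde z^+_0=\bar\rho/(q-q^{-1})$ to start the recursion, noting the leading $k=0$ term on the second sum of (\ref{condeq}) isolates $\tilde z^+_{n+1}$ with an invertible coefficient). Thus the three families $\{y^+_{-k}\},\{z^+_{n+1}\},\{y^+_{\ell+1}\}$ in the statement are precisely the surviving generators after this elimination, which is why the $\tilde z$'s do not appear in Proposition~\ref{pbw}. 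So the second step is to verify that passing from the ${\cal U}_q^+$ PBW basis (which involves all four families in some admissible order) to the $U_q^{T,+}$ basis amounts to this elimination being consistent with Terwilliger's own quotient analysis in \cite[Lemma 2.8]{Ter19b}.

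**Concretely I would proceed as follows.** First, show spanning: using relations (\ref{def1})--(\ref{def11}) one can reorder any monomial in the four families into the admissible order; then using (\ref{condeq}) every occurrence of a $\tilde z^+_{n+1}$ is rewritten in terms of the three retained families, so monomials in $\{y^+_{-k}\}<\{z^+_{n+1}\}<\{y^+_{\ell+1}\}$ span. Second, show linear independence: this is where I would appeal directly to \cite[Theorem 10.2]{Ter19b}, since the map ${\cal U}_q^+\twoheadrightarrow U_q^{T,+}$ identifies the image of the reduced PBW monomials, and Terwilliger establishes that the analogous reduced monomials form a basis. The independence in the quotient follows because the elimination of $\tilde z$'s is triangular with respect to a suitable filtration (say by total degree in the generators), so no further collapse occurs.

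**The main obstacle** I anticipate is the bookkeeping in the reordering and elimination step: I must check that any choice of linear order satisfying $y^+_{-k}<z^+_{n+1}<y^+_{\ell+1}$ genuinely works, i.e. that the reordering relations (\ref{def4})--(\ref{def11}) close up consistently and that the recursive solution of (\ref{condeq}) for $\tilde z^+_{n+1}$ respects the chosen order without introducing circular dependencies. Since the statement is flagged as ``a straightforward adaptation of \cite[Theorem 10.2]{Ter19b},'' the cleanest route is to make the dictionary explicit, verify the ordering correspondence, and then invoke Terwilliger's theorem verbatim for the hard independence half, leaving only the routine spanning-and-elimination verification to be carried out in the chosen conventions.
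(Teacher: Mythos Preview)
Your proposal is correct and follows exactly the route the paper indicates: the paper gives no proof at all beyond the single sentence ``The following proposition is a straightforward adaptation of \cite[Theorem 10.2]{Ter19b},'' and your plan---transport the PBW result for ${\cal U}_q^+$ through the quotient by (\ref{condeq}), using the substitution dictionary to match orderings and the recursive elimination of the $\tilde z^+_{n+1}$---is precisely that adaptation made explicit. Your identification of the $k=0$ term in (\ref{condeq}) as isolating $\tilde z^+_{n+1}$ with invertible coefficient is the correct mechanism for the elimination step.
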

Combining $\sigma$, $S$, 
  other examples of PBW bases can be obtained. 
\vspace{1mm}

The equitable generators of $U_q^{T,+}$ are polynomials in $y_0^+,y_1^+$. Explicit expressions are obtained recursively adapting \cite[Lemma 2.9]{Ter19}. For instance, besides $y_0^+,y_1^+$, the first generators read:
\beqa
\quad && \ z^+_{1} = q{y^+_1}y^+_0-q^{-1}{y^+_0}y^+_1 \ ,\label{zp1}\\
&&y^+_{-1} = \frac{1}{\bar\rho}\left( (q^2+q^{-2})y_0^+y_1^+y_0^+ -(y_0^+)^2y^+_1 - y^+_1 (y_0^+)^2\right) \ \label{ypm1}
\eeqa
and $y^+_2=\sigma(y^+_{-1})$, ${\tilde z}^+_1=\sigma(z^+_{1})$.  Thus, the algebra $U_q^{T,+}$ has a natural $\mathbb N^2$-grading. Define ${\rm deg}: U_q^{T,+} \rightarrow \mathbb N \times \mathbb N$.  For instance, ${\rm deg}(y^+_{0})=(1,0)$ and ${\rm deg}(y^+_{1})=(0,1)$. More generally, ${\rm deg}(y^+_{-k}) = (k+1,k)$, ${\rm deg}(y^+_{k+1}) = (k,k+1)$, ${\rm deg}(z^+_{k+1})={\rm deg}(\tilde{z}^+_{k+1}) = (k+1,k+1)$.\vspace{1mm}

The algebra $U_q^{T,+}$ admits a presentation in the form of a quadratic algebra of Freidel-Maillet type \cite{FM91}, which can be viewed as a limiting case of a  reflection algebra introduced in the context of boundary quantum inverse scattering theory \cite{Cher,Skly88}. Let  $R(u)$ be the quantum $R-$matrix
defined by \cite{Baxter}
\begin{align}
R(u) =\left(
\begin{array}{cccc} 
 uq -  u^{-1}q^{-1}    & 0 & 0 & 0 \\
0  &  u -  u^{-1} & q-q^{-1} & 0 \\
0  &  q-q^{-1} & u -  u^{-1} &  0 \\
0 & 0 & 0 & uq -  u^{-1}q^{-1}
\end{array} \right) \ ,\label{R}
\end{align}
where $u$  is an indeterminate,  called  `spectral parameter' in the literature on integrable systems, and 
deformation parameter $q$. It is known that $R(u)$ satisfies the quantum Yang-Baxter equation in the space ${\cal V}_1\otimes {\cal V}_2\otimes {\cal V}_3$, with ${\cal V}\equiv {\mathbb C}^2$. Using the standard notation 
\beqa
R_{ij}(u)\in \mathrm{End}({\cal V}_i\otimes {\cal V}_j),\label{notR}
\eeqa
the Yang-Baxter equation reads 
\begin{align}
R_{12}(u/v)R_{13}(u)R_{23}(v)=R_{23}(v)R_{13}(u)R_{12}(u/v)\ .\label{YB}
\end{align}
 As usual, intoduce the permutation operator  $P=R(1)/(q-q^{-1})$. Here, note that  $R_{12}(u)=PR_{12}(u)P=R_{21}(u)$.
In addition to (\ref{R}), define:
\beqa
 R^{(0)}=diag(1,q^{-1},q^{-1},1)\ .\label{R0}
\eeqa
Define the generating functions:
\begin{align}
{\cY}_+(u)=\sum_{k\in {\mathbb N}}{y}^+_{k+1}U^{-k-1} \ , \quad {\cY}_-(u)=\sum_{k\in  {\mathbb N}}{y}^+_{-k}U^{-k-1} \ ,\label{c1}\\
 \quad {\cZ}_+(u)=\sum_{k\in {\mathbb N}}{\tilde z}^+_{k+1}U^{-k-1} \ , \quad {\cZ}_-(u)=\sum_{k\in {\mathbb N}}{z}^+_{k+1}U^{-k-1} \ ,\label{c2}
\end{align}
where  the shorthand notation $U=qu^2/(q+q^{-1})$
is used.  Let $\bar k_\pm \in {\mathbb C}(q)$ such that
\beqa
\bar\rho={\bar k}_+{\bar k}_-(q+q^{-1})^2\ .\label{rho}
\eeqa

For the alternating central extension of $U_q^+$, a Freidel-Maillet type presentation has been proposed in \cite[Theorem 3.1]{Bas20}. It is given in terms of a K-operator satisfying a Freidel-Maillet type equation. To get $U_q^+$, a condition on the quantum determinant of the K-operator is required. 
\begin{thm}\label{thm1} The algebra $U_q^{T,+}$ has a presentation of Freidel-Maillet type.
 Let $K(u)$ be a square matrix such that 
\beqa
&&  K(u)=
       \begin{pmatrix}
      uq \cY_+(u) &  \frac{1}{{\bar k}_-(q+q^{-1})}\cZ_+(u) + \frac{{\bar k}_+(q+q^{-1})}{(q-q^{-1})} \\
     \frac{1}{{\bar k}_+(q+q^{-1})}\cZ_-(u) + \frac{{\bar k}_-(q+q^{-1})}{(q-q^{-1})}  & uq \cY_-(u) 
      \end{pmatrix} \label{K}
\eeqa
with (\ref{c1})-(\ref{c2}).
The defining relations are given by:
\begin{align} R(u/v)\ (K(u)\otimes {\mathbb I})\ R^{(0)}\ ({\mathbb I} \otimes K(v))\
= \ ({\mathbb I} \otimes K(v))\  R^{(0)}\ (K(u)\otimes {\mathbb I})\ R(u/v)\ 
\label{RE} \end{align}
and\footnote{As usual, `$\rm tr_{12}$' stands for the trace over $\cal V_1 \otimes \cal V_2$. Also, we denote $P^{-}_{12}=(1-P)/2$.}
\beqa
\tr_{12}\big(P^{-}_{12}(K(u)\otimes {\mathbb I})\ R^{(0)} ({\mathbb I} \otimes K(uq))\big)= -\frac{\bar\rho}{(q-q^{-1})^2}\ .\label{condqdet}
\eeqa
\end{thm}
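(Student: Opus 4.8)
The plan is to establish the stated presentation by reading (\ref{RE}) and (\ref{condqdet}) as two logically independent conditions and proving that, on the generators that arise as coefficients of the entries of $K(u)$, the first is equivalent to (\ref{def1})--(\ref{def11}) and the second to (\ref{condeq}); combined with Theorem~\ref{thm:Tp} this identifies the two algebras. Throughout I would abbreviate the entries of (\ref{K}) as
\[
K(u)=\begin{pmatrix} a(u) & b(u)\\ c(u) & d(u)\end{pmatrix},\qquad a(u)=uq\,\cY_+(u),\quad d(u)=uq\,\cY_-(u),
\]
with $b(u)$ and $c(u)$ the indicated affine combinations of $\cZ_+(u)$ and $\cZ_-(u)$; note that their constant terms are exactly the degree-zero contributions carrying $\tz^+_0=z^+_0=\bar\rho/(q-q^{-1})$, so that $b$ and $c$ may be regarded as generating functions over all of $\mathbb{N}$.

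For the Freidel-Maillet equation I would set $K_1(u)=K(u)\otimes{\mathbb I}$, $K_2(v)={\mathbb I}\otimes K(v)$, substitute into (\ref{RE}), and expand both sides as $4\times 4$ matrices using the explicit forms (\ref{R}) and (\ref{R0}). Equating the sixteen entries gives quadratic identities in $a,b,c,d$ at the two spectral parameters; inserting the series (\ref{c1})--(\ref{c2}) and comparing coefficients of $U^{-k-1}V^{-\ell-1}$, with $U=qu^2/(q+q^{-1})$, returns the relations (\ref{def1})--(\ref{def11}). This is precisely the computation of \cite[Theorem 3.1]{Bas20} for the central extension; I would import it and only transport it through the substitutions recorded after Theorem~\ref{thm:Tp} together with the parametrization (\ref{rho}) of $\bar\rho$ in terms of $\bar k_\pm$.

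The new ingredient is the determinant condition, which I would handle by a direct trace computation. Because $P^-_{12}=(1-P)/2$ projects onto the one-dimensional antisymmetric subspace of $\cal V_1\otimes\cal V_2$ and $R^{(0)}$ is diagonal, the left-hand side of (\ref{condqdet}) collapses to the antisymmetric (``quantum determinant'') combination
\[
\tr_{12}\big(P^-_{12}(K(u)\otimes{\mathbb I})\,R^{(0)}\,({\mathbb I}\otimes K(uq))\big)=\tfrac12\Big(q^{-1}\big(a(u)d(uq)+d(u)a(uq)\big)-b(u)c(uq)-c(u)b(uq)\Big).
\]
Substituting the entries and using (\ref{rho}) in the form $\bar k_+\bar k_-(q+q^{-1})^2=\bar\rho$, the purely constant part equals $-\bar\rho/(q-q^{-1})^2$, matching the right-hand side of (\ref{condqdet}) so that it cancels. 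The remaining identity is bilinear in $\cY_\pm$ and in the $\cZ_\pm$; using the scaling $U(uq)=q^2U(u)$ and matching the coefficient of $U^{-n-1}$ reproduces (\ref{condeq}) for each $n\ge 0$, the two linear-in-$\cZ$ contributions supplying the $k=0$ and $k=n+1$ boundary summands through the constant values $z^+_0=\tz^+_0=\bar\rho/(q-q^{-1})$.

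I expect the principal difficulty to be organizational rather than structural. On the Freidel-Maillet side it is the bookkeeping of disentangling all eleven relations from the entrywise identities while checking that no independent constraint is missed or double-counted. On the determinant side the delicate point is reconciling the symmetric products $a(u)d(uq)+d(u)a(uq)$ and $b(u)c(uq)+c(u)b(uq)$ produced by the trace with the single ordered sum of (\ref{condeq}); this requires the commutation relations (\ref{def4})--(\ref{def5}) to reorder factors, so that (\ref{condqdet}) should be read modulo (\ref{def1})--(\ref{def11}). Verifying that the $q$-powers emerging from the $q^2$-scaling of $U$ assemble into the weights $q^{-n+2k}$ and $q^{2k-n-1}$ of (\ref{condeq}) is the last check.
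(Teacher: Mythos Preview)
Your proposal is correct and follows essentially the same route as the paper. For the Freidel--Maillet part you both import \cite[Theorem~3.1]{Bas20} via the substitutions recorded after Theorem~\ref{thm:Tp}; for the determinant part the paper performs the same trace computation you outline, packaging the result as $\frac{1}{2(q-q^{-1})}\big({\cal C}(u)+\sigma({\cal C}(u))-\tfrac{2\bar\rho}{q-q^{-1}}\big)$ where ${\cal C}(u)=(q-q^{-1})u^2q^2\cY_+(u)\cY_-(uq)-\tfrac{q-q^{-1}}{\bar\rho}\cZ_-(u)\cZ_+(uq)-\cZ_-(u)-\cZ_+(uq)$, and then invokes the automorphism $\sigma$ together with the exchange relations from (\ref{RE}) to obtain $\sigma({\cal C}(u))={\cal C}(u)$, reducing (\ref{condqdet}) to ${\cal C}(u)=0$, i.e.\ (\ref{condeq}). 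This is exactly your ``reconciling the symmetric products with the single ordered sum'' step, but organized via $\sigma$; note that the reordering you need uses not only (\ref{def4})--(\ref{def5}) for the $\cY_\pm$ part but also (\ref{def10})--(\ref{def11}) (and implicitly (\ref{def1})) for the $\cZ_\pm$ part, so your citation of the required relations should be widened accordingly.
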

\begin{proof} By specializing some results of \cite{Bas20}, the proof follows. The first part of the proof concerns the equivalence between (\ref{def1})-(\ref{def11}) and (\ref{RE}). Recall the defining relations of the alternating central extension of $U_q^+$ (i.e. ${\cal U}_q^+$) given in \cite[Definition 2.1]{Bas20}. Observe that they coincide with the subset of relations (\ref{def1})-(\ref{def11}) upon the substitution:
\beqa
&&\tW_{-k}\rightarrow y^+_{k+1}\ ,\qquad \tW_{k+1}\rightarrow y^+_{-k}\ ,\label{cor1}\\
&&\tG_{k+1}\rightarrow {\tilde z}^+_{k+1}, \qquad \tilde{\tG}_{k+1}\rightarrow z^+_{k+1}\ .\label{cor2}
\eeqa 
Now, by \cite[Theorem 3.1]{Bas20} it is known that ${\cal U}_q^+$ admits a Freidel-Maillet type presentation given by a K-operator satisfying (\ref{RE}). So, for the K-operator (\ref{K}), the relations (\ref{def1})-(\ref{def11}) are equivalent to (\ref{RE}).

The second part of the proof concerns the equivalence between (\ref{condeq}) and (\ref{condqdet}). By \cite[Proposition 3.3]{Bas20}, the l.h.s of (\ref{condqdet}) is the so-called quantum determinant that generates the center of ${\cal U}_q^+$. For convenience, define
\beqa
{\cal C}(u) = (q-q^{-1})u^2q^2\cY_+(u)\cY_-(uq)  - \frac{(q-q^{-1})}{\bar\rho} \cZ_-(u)\cZ_+(uq)  - \cZ_-(u) - \cZ_+(uq)\ .\label{Zvee}
\eeqa
Inserting (\ref{K}) into the l.h.s. of (\ref{condqdet}), the quantum determinant reduces to:
\beqa
\tr_{12}\big(P^{-}_{12}(K(u)\otimes {\mathbb I})\ R^{(0)} ({\mathbb I} \otimes K(uq))\big)= \frac{1}{2(q-q^{-1})}\left( {\cal C}(u) + \sigma({\cal C}(u)) -\frac{2\bar\rho}{(q-q^{-1})}\right)\ .\label{condqdetred}
\eeqa
Using the exchange relations between the generating functions (\ref{c1})-(\ref{c2}) extracted from (\ref{RE}), one shows $\sigma({\cal C}(u)) = {\cal C}(u)$.
Thus, the condition (\ref{condqdet}) is equivalent to:
\beqa
 {\cal C}(u) =0 \ .\label{Zveecond}
\eeqa
Extracting the set of constraints on the coefficients of the generating function ${\cal C}(u)$, one gets (\ref{condeq}). 
\end{proof}

Note that  eqs. (\ref{RE}), (\ref{condqdet}), are left invariant under the transformation $(u,v)\mapsto (\lambda u,\lambda v)$ for $\lambda$ invertible and $[\lambda,U_q^{T,+}]=0$. This property will be used in further analysis.

\section{Relating  Terwilliger's and Drinfeld's second realizations}\label{sec:FM}
It is natural to ask for the precise relationship between the equitable and Drinfeld's generators. As shown in this section, the Freidel-Maillet type presentation of Theorem \ref{thm1} combined with the framework of FRT algebras \cite{FRT89,RS,DF93} gives a suitable framework for answering this question. In addition, it provides a tool for constructing left or right coaction maps that ensure a comodule algebra structure for $U_q^{T,+}$.\vspace{1mm}

Below, as a preliminary the FRT presentation for $U_q^{Dr}$ is first recalled, and  Drinfeld type `alternating' subalgebras $\{U_q^{Dr,a,\pm}\}$, their extensions $\{{U'_q}^{Dr,a,\pm}\}$ for $a= \triangleright,\triangleleft$, are introduced. Then, a K-operator satisfying a Freidel-Maillet type equation is constructed, and used to derive an injective homomorphism $\nu: \ U_q^{T,+} \rightarrow {U'_q}^{Dr,\triangleright,+}$. Using the comodule algebra structure of the Freidel-Maillet type presentation, a left coaction map $\delta: \ U_q^{T,+} \rightarrow {U'_q}^{Dr,\triangleright,+} \otimes U_q^{T,+}$ is also derived. For the specialization  $\bar\delta: \ U_q^{T,+} \rightarrow {U'_q}^{Dr,\triangleright,+}/_{C=1} \otimes U_q^{T,+}$ the image of the generating functions for the equitable generators (\ref{c1}), (\ref{c2}) is given.

\subsection{FRT presentation} For the quantum affine Lie algebra  $U_q(\widehat{gl_2})$, a FRT presentation is known \cite{RS,DF93}. Define the $R$-matrix:
\begin{equation}\label{def:r}
\tilde R(z)=\begin{pmatrix} 1
      &0&0&0\\
       0&\frac{z-1}{zq-q^{-1}}& \frac{ z(q-q^{-1})}{zq-q^{-1}}&0\\
       0&  \frac{(q-q^{-1})}{zq-q^{-1}} & \frac{ z-1}{zq-q^{-1}} &0\\
        0&0&0&1
      \end{pmatrix} \ 
\end{equation}
where $z$  is an indeterminate. It is known that $\tilde R(z)$ satisfies the quantum Yang-Baxter equation 
\begin{align}
\tilde R_{12}(z_1/z_2)\tilde R_{13}(z_1)\tilde R_{23}(z_2)=\tilde R_{23}(z_2)\tilde R_{13}(z_1) \tilde R_{12}(z_1/z_2)\ .\label{YB}
\end{align}
In terms of ${\tilde R}(z)$, the permutation operator reads $P=\tilde R(1)$. Note that  $\tilde R_{12}(z)=\tilde R_{21}^{t_1t_2}(z)$. 
\begin{thm}\label{def:UqRS}\cite{RS,DF93} $U_q(\widehat{gl_2})$ admits a FRT presentation given by a unital associative algebra with generators  $\{{\tx}_k^{\pm}, \tk^+_{j,-\ell}, \tk^-_{j,\ell}, q^{\pm c/2} |k\in {\mathbb Z},\ell\in {\mathbb N},j=1,2 \}$. The  generators  $q^{\pm c/2}$ are central and mutally inverse. Define:
\beqa
&&  L^\pm(z)=
       \begin{pmatrix}
     \tk_1^\pm(z)  &    \tk_1^\pm(z)  \tf^\pm(z)    \\
   \te^\pm(z) \tk_1^\pm(z)     &   \tk_2^\pm(z) +   \te^\pm(z)  \tk_1^\pm(z)    \tf^\pm(z)   
      \end{pmatrix} \ \label{Lpm}
\eeqa
in terms of  the generating functions in the indeterminate $z$:
\beqa
{\te}^+(z)&=& (q-q^{-1})\sum_{k=0}^\infty q^{k(c/2-1)} {\tx}_{-k}^- z^{k} \ , \quad {\te}^-(z)=-(q-q^{-1})\sum_{k=1}^\infty q^{k(c/2+1)}{\tx}^-_{k}z^{-k} \ ,\label{eq:cuDre}\\
{\tf}^+(z)&=&(q-q^{-1})\sum_{k=1}^\infty q^{-k(c/2+1)}{\tx}^+_{-k}z^{k} \ , \quad {\tf}^-(z)=-(q-q^{-1})\sum_{k=0}^\infty q^{-k(c/2-1)}{\tx}^+_{k}z^{-k} \ ,\label{eq:cuDrf}\\
\quad  {\tk}_{j}^+(z)&=&\sum_{k=0}^\infty{{\tk}}^+_{j,-k}z^{k}\  \ ,\quad \qquad \qquad {\tk}_{j}^-(z)=\sum_{k=0}^\infty{{\tk}}^-_{j,k}z^{-k}\ ,\quad j=1,2\ .\label{eq:cuDrk}
\eeqa
The defining relations are the following:
\beqa
\tk^+_{i,0}\tk^-_{i,0}&=& \tk^-_{i,0}\tk^+_{i,0}=1 \ ,\label{YBApm0}\\ 
 \tilde  R(z/w)\ (L^\pm(z)\otimes {\mathbb I})\ ( {\mathbb I} \otimes L^\pm(w))  &=&  ( {\mathbb I} \otimes L^\pm(w))\  (L^\pm(z)\otimes {\mathbb I})   \  \tilde R(z/w)  \ ,  \label{YBApm1}\\
\tilde R(q^{c}z/w)\ (L^+(z)\otimes {\mathbb I})\ ( {\mathbb I} \otimes L^-(w))  &=&  ( {\mathbb I} \otimes L^-(w))\  (L^+(z)\otimes {\mathbb I})   \ \tilde R(q^{-c}z/w)  \ .  \label{YBApm2}
 \eeqa
For (\ref{YBApm1}), the expansion direction of $ \tilde R(z/w)$ can be chosen in $z/w$ or $w/z$, but for 
(\ref{YBApm1}) the expansion direction is only in $z/w$.
The Hopf algebra structure is characterized as follows. The coproduct\footnote{The index $[j]$ characterizes the `quantum space' $V_{[j]}$ on which the entries of $L^\pm(z)$ act. With respect to the ordering $V_{[1]}\otimes V_{[2]}$, one has:
\beqa
((T)_{[\textsf 1]}(T')_{[\textsf 2]})_{ij} =\sum_{k=1}^2  (T)_{ik}\otimes (T')_{kj}  \ .
\eeqa
}  
$\Delta$, antipode ${\cal S}$ and counit ${\cal E}$ are such that:
\beqa
&& \Delta(L^\pm(z)) = (L^\pm(zq^{\pm(1\otimes c/2)}))_{[\textsf 1]} (L^\pm(zq^{\mp(c/2\otimes 1)}))_{[\textsf 2]} \ , \label{coprodUqgl2}\\
&& {\cal S}(L^\pm(z))=L^\pm(z)^{-1}\ ,\quad {\cal E}(L^\pm(z))={\mathbb I}\ . \label{counitUqgl2}
\eeqa
\end{thm}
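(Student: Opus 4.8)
The plan is to establish this as the Ding--Frenkel isomorphism: one shows that the FRT/RLL presentation defined by (\ref{Lpm})--(\ref{YBApm2}) is isomorphic to Drinfeld's second realization $U_q^{Dr}$, suitably extended to $\widehat{gl_2}$ and recalled in Appendix \ref{ap:A}. The entire argument hinges on the Gaussian decomposition of the L-operators, which is precisely why the entries of $L^\pm(z)$ in (\ref{Lpm}) are already displayed in factored form. Since the constant term $\tk^\pm_{1,0}$ is invertible by (\ref{YBApm0}), each $L^\pm(z)$ admits in the appropriate completion a unique decomposition
\beqa
L^\pm(z)=\begin{pmatrix} 1 & 0 \\ \te^\pm(z) & 1 \end{pmatrix}\begin{pmatrix} \tk_1^\pm(z) & 0 \\ 0 & \tk_2^\pm(z) \end{pmatrix}\begin{pmatrix} 1 & \tf^\pm(z) \\ 0 & 1 \end{pmatrix}\ ,\nonumber
\eeqa
which reproduces (\ref{Lpm}) and defines the Gaussian coordinates $\te^\pm(z),\tf^\pm(z),\tk_j^\pm(z)$ uniquely.

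The core step is to insert this decomposition into the RLL relations (\ref{YBApm1})--(\ref{YBApm2}) and to read off, by comparing matrix entries of each $4\times 4$ identity, the relations among the Gaussian coordinates. These fall into three groups: the exchange relations of the Cartan currents $\tk_j^\pm(z)$; the current relations obeyed by $\te^\pm(z)$ and by $\tf^\pm(z)$; and the mixed relation descending from (\ref{YBApm2}), whose shifted argument $\tilde R(q^{\pm c}z/w)$ is exactly what forces the central element $q^{\pm c/2}$ to enter with the exponents displayed in (\ref{eq:cuDre})--(\ref{eq:cuDrf}). Rewriting $\te^\pm,\tf^\pm$ in terms of the Drinfeld currents $\tx^\pm_k$ via (\ref{eq:cuDre})--(\ref{eq:cuDrf}) and the Cartan currents in terms of $\tho_\ell$ and $\tK^{\pm1}$, one checks term by term that these coincide with the defining relations of $U_q^{Dr}$; conversely the same substitution turns the Drinfeld relations back into (\ref{YBApm1})--(\ref{YBApm2}). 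The two assignments are mutually inverse, which yields the isomorphism of algebras.

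For the Hopf structure I would verify that (\ref{coprodUqgl2}) --- the standard matrix comultiplication, in which the entries of $L^\pm$ in the two quantum spaces are multiplied as in the footnote to (\ref{coprodUqgl2}), dressed by the central shifts $q^{\pm(1\otimes c/2)}$ and $q^{\mp(c/2\otimes 1)}$ --- is compatible with RLL: substituting $\Delta(L^\pm(z))$ into (\ref{YBApm1})--(\ref{YBApm2}) and using that the operators acting on the two tensor factors commute, each relation collapses to two commuting copies of itself, so $\Delta$ extends to an algebra homomorphism. Coassociativity then follows from the additivity of the shift exponents, and the formulas (\ref{counitUqgl2}) for ${\cal S}$ and ${\cal E}$ are checked directly from ${\cal S}(L^\pm(z))\,L^\pm(z)={\mathbb I}$ together with the normalization $\tilde R(1)=P$.

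The main obstacle is this middle step: the explicit extraction of Drinfeld's relations from the Gauss-decomposed RLL equations. The $L^+L^-$ relation (\ref{YBApm2}) is the delicate one, since it simultaneously encodes the commutation between the positive and negative halves of the algebra and pins down the central charge through its $q^{\pm c}$-shifted spectral argument; keeping careful track of the direction of expansion of $\tilde R$, as flagged just after (\ref{YBApm2}), is essential to obtain the correct normal ordering of the generating functions. One must also confirm that the higher, Serre-type relations of $U_q^{Dr}$ are automatically implied by the quadratic RLL relations --- a feature special to rank one, but still requiring a short separate check.
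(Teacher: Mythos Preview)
Your outline is essentially the standard Ding--Frenkel argument and is correct in broad strokes. Note, however, that the paper does not actually prove this theorem: it is stated as a known result, attributed to \cite{RS,DF93}, and immediately after the statement the text points to \cite[Section 4]{Jing} (see also \cite{FMu}) for the complete isomorphism with Drinfeld's second realization. So there is no proof in the paper for you to compare against --- the theorem functions as background material.

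That said, your sketch is the right one and matches what those references do: Gaussian decomposition of $L^\pm(z)$, extraction of the current relations from the RLL equations (with the $q^{\pm c}$ shift in (\ref{YBApm2}) fixing the central charge), and verification of the Hopf structure by substituting the matrix coproduct back into RLL. One small caution: your remark that the Serre-type relations are ``automatically implied'' in rank one is slightly misleading --- for $\widehat{gl_2}$ there are no cubic Serre relations among the Drinfeld currents of a single node, only the quadratic relations (\ref{gl3}), and these do follow directly from the $(1,1)$ and $(2,2)$ blocks of the same-sign RLL equation. So that ``short separate check'' is in fact vacuous here.
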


The complete isomorphism between the FRT presentation of Theorem \ref{def:UqRS} and Drinfeld second presentation of $U_q(\widehat{gl_2})$ is given in \cite[Section 4]{Jing} (see also \cite{FMu}). Following \cite[Section 4]{Jing}, introduce the generating functions
\beqa
\tk_i^\pm(z)=\tk_{i,0}^{\pm } \exp\left( \pm (q-q^{-1})\sum_{n= 1}^\infty a_{i,\mp n} z^{\pm n}\right) \  \label{kpmz}
\eeqa
in terms of the new generators $a_{i,\mp n}$. In terms of Drinfeld generators $\tho_m$, the new generators $a_{1,m},a_{2,m}$ decompose as:
\beqa
a_{1,m}= \frac{1}{q^m+q^{-m}}(\tho_m + \gamma_m)\ ,\label{a1mbis}\qquad
a_{2,m}=  -\frac{1}{q^m+q^{-m}}(q^{2m}\tho_m -  \gamma_m)\ \label{a2mbis}\ ,
\eeqa
where $\gamma_m$ are central elements of $U_q(\widehat{gl_2})$. For our purpose, introduce the surjective map $\gamma'_D:  U_q(\widehat{gl_2})  \rightarrow  U_q^{Dr}$ that is defined as follows. Let $\gamma'_m$ be Laurent polynomials in $C^{1/2}$, that will be specified later on. We define:
\beqa
&&\gamma'_D( q^{c/2})  \mapsto C^{1/2}   \ ,\label{gamD0}\\
&& \gamma'_D(\tx_k^\pm) \mapsto  \tx_k^\pm \ ,\label{gamD1}\\
&&\gamma'_D( a_{1,m})\mapsto \frac{1}{q^m+q^{-m}}(\tho_m + \gamma'_m) \ ,\qquad \gamma_D(a_{2,m}) \mapsto -\frac{1}{q^m+q^{-m}}(q^{2m}\tho_m - \gamma'_m)\ ,\label{gamD2} \\
&&\gamma'_D(\tk_{2,0}^\mp(\tk_{1,0}^\mp)^{-1}) \mapsto  \tK^{\pm 1} \ ,
\qquad \quad \gamma_D(\tk_{1,0}^\pm\tk_{2,0}^\pm)\mapsto 1 \ .\label{gamD3} 
\eeqa
Note that the map $\gamma'_D$ slightly differs from the map chosen in \cite[eq. (5.62)-(5.65)]{Bas20}.

\subsection{Alternating subalgebras of $U_q^{Dr}$}
Certain `alternating' subalgebras of $U_q(\widehat{sl_2})$ have been introduced in \cite{Bas20}, that are now reviewed for further analysis. 
\begin{defn}\label{defaltsl2q}
\beqa
U_q^{Dr,\triangleright,\pm} &=& \{C^{\mp k/2}\tK^{-1}\tx_{k}^\pm, C^{\pm (k+1)/2}\tx^\mp_{k+1}, \tho_{k+1}|k\in {\mathbb N}\} \ ,  \nonumber\\
U_q^{Dr,\triangleleft,\pm}&=& \{C^{\mp k/2}\tx_{-k}^\pm, C^{\pm (k+1)/2} \tx^\mp_{-k-1}\tK,\tho_{-k-1}| k\in {\mathbb N}\} \ . \nonumber
\eeqa
We call $U_q^{Dr,\triangleright,\pm}$ and $U_q^{ Dr, \triangleleft,\pm}$ the
 right and left alternating subalgebras of  $U_q^{Dr}$.  The subalgebra generated by $\{\tK^{\pm 1},C^{\pm 1/2}\}$ is denoted  $U_q^{Dr,\diamond}$.
\end{defn}

The defining relations of the alternating subalgebras are identified using (\ref{gl1})-(\ref{gl4}). Consider for instance $U_q^{Dr,\triangleright,+}$. If we denote $A_k^{+}= C^{-k/2}\tK^{-1}\tx_k^+$, $A_{\ell}^{-}= C^{\ell/2}\tx_{\ell}^-$, $B_{\ell}=\tK^{-1}\psi_\ell$ and $D_{\ell}=\tho_\ell$, $k\geq 0,\ \ell \geq 1$, using the relations in Appendix \ref{ap:A} one gets the defining relations:
\beqa
&& \big[ D_k,D_\ell\big]= 0 \ ,\quad \big[ D_k,B_\ell\big]= 0\ , \label{alt1}\\
&& \big[ D_k, A_\ell^{\pm} \big]  = \pm \frac{\big[ 2k\big]_q}{k}  A_{k+\ell}^{\pm}\ ,\\
&& A^\pm_{k+1} A^\pm_\ell  -q^{\pm 2} A^\pm_{\ell} A^\pm_{k+1} = q^{\pm 2} A^\pm_{k} A^\pm_{\ell+1} - A^\pm_{\ell+1} A^\pm_k \ ,\label{alt3}\\ && \big[ A^+_k, A^-_\ell   \big]_{q^{-1}}  = \frac{q^{-1} B_{k+\ell}}{q-q^{-1}}   \ .\label{alt4}
\eeqa
%
%
\begin{lem}\label{isoUqDrDJp} $U_q^{Dr,\triangleright,+} \cong U_q^{DJ,+}$. 
\end{lem}
\begin{proof} Following \cite{BCP} introduce the root vectors $\{E_{k\delta+\alpha_i}, E_{\ell\delta}|i=0,1,k\geq 0,\ell \geq 1\}\in U_q^{DJ,+}$ and the elements $\tilde{\psi}_\ell$, $\ell\geq 1$, through the functional equation:
. 
\beqa
1+ (q-q^{-1})\sum_{\ell=1}^{\infty} \tilde{\psi}_\ell z^\ell = \exp \left( (q-q^{-1}) \sum_{\ell=1}^{\infty} E_{\ell\delta} z^\ell  \right)\ .
\eeqa
A comparison between the specialization to $U_q(\widehat{sl_2})$ of the relations in \cite[Proposition 1.2]{BCP} and the relations (\ref{alt1})-(\ref{alt4}) gives the isomorphism $U_q^{Dr,\triangleright,+} \rightarrow U_q^{DJ,+}$:
\beqa
A_k^+ \rightarrow E_{k\delta+\alpha_1}\ , \quad A_{\ell}^- \rightarrow -q^{-2} E_{(\ell-1)\delta+\alpha_0}\ ,\quad B_{\ell}= (q-q^{-1})\tilde{\psi}_{\ell}\ ,\quad D_\ell = E_{\ell\delta}\ .
\eeqa
\end{proof}

The defining relations of the other alternating subalgebras can be similarly written and related with $U_q^{DJ,+}$ or $U_q^{DJ,-}$.\vspace{1mm}

For   $U_q(\widehat{sl_2})$,  it is known that given a certain ordering the elements  $\{{\tx}_k^{\pm}, \tho_{\ell},\tK^\pm, C^{\pm 1/2}\}$ 
 generate a PBW basis. See \cite[Proposition 6.1]{Beck} with \cite[Lemma 1.5]{BCP}.  For the alternating subalgebras, PBW bases follow naturally. Let us choose the ordering:
\beqa
C^{1/2}\tx_1^- <  C\tx_2^- < \cdots < \tho_{1} < \tho_{2}   < \cdots <  C^{-1/2} \tK^{-1}\tx_1^+ <  \tK^{-1} \tx_0^+ \  \label{pbw1}
\eeqa
for $U_q^{Dr,\triangleright,+}$,
whereas for the  subalgebra $U_q^{Dr,\triangleleft,-}$ we choose the ordering:
\beqa
\tx_{0}^- < C^{1/2} \tx_{-1}^-  < \cdots < \tho_{-1} < \tho_{-2}   < \cdots < C^{-1}\tx_{-2}^+ \tK < C^{-1/2}  \tx_{-1}^+\tK \ . \label{pbw2}
\eeqa
It follows:
\begin{prop}\label{pbwalt} The vector space   $U_q^{Dr,\triangleright,+}$  (resp. $U_q^{Dr,\triangleleft,-}$)  has a linear basis consisting of the products $x_1x_2\cdots x_n$ $(n\in {\mathbb N})$ with $x_i\in U_q^{Dr,\triangleright,+}$  (resp. $x_i\in U_q^{Dr,\triangleleft,-}$)   such that  $x_1 \leq x_2 \leq \cdots \leq x_n$. 
\end{prop}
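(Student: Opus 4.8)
The plan is to derive both PBW statements from the known PBW basis of $U_q^{Dr}$ in the Drinfeld generators $\{\tx_k^\pm,\tho_\ell,\tK^{\pm1},C^{\pm1/2}\}$ (\cite{Beck}, \cite{BCP}), of which \eqref{pbw1} and \eqref{pbw2} are the restrictions to the relevant generators via a compatible Beck-type ordering. I treat $U_q^{Dr,\triangleright,+}$ in detail; the case of $U_q^{Dr,\triangleleft,-}$ is entirely parallel, with \eqref{pbw2} replacing \eqref{pbw1} (alternatively it follows by transporting the argument through a grading-reversing (anti)automorphism of $U_q^{Dr}$). Write $S=\{A_\ell^-,\tho_m,A_k^+\}$ for the generating set appearing in \eqref{pbw1}, where $A_k^+=C^{-k/2}\tK^{-1}\tx_k^+$ and $A_\ell^-=C^{\ell/2}\tx_\ell^-$.

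For linear independence: because $C^{\pm1/2}$ is central and $\tK\tx_k^\pm\tK^{-1}=q^{\pm2}\tx_k^\pm$, any ordered monomial in $S$ — a product of $S$-generators taken in the order \eqref{pbw1} with nonnegative exponents $(a_\ell),(h_m),(b_k)$ — can be rewritten, after collecting all central dressings to the left at the cost of a single invertible scalar $q^{\bullet}$, as
\[ q^{\bullet}\,C^{d}\,\tK^{-c}\prod_{\ell}(\tx_\ell^-)^{a_\ell}\prod_{m}\tho_m^{h_m}\prod_{k}(\tx_k^+)^{b_k},\qquad c=\sum_k b_k,\quad d=\tfrac12\Big(\sum_\ell \ell a_\ell-\sum_k k b_k\Big). \]
This is a single element of the Drinfeld PBW basis, and the assignment $(a_\ell,h_m,b_k)\mapsto$ (this element) is injective, since the exponents are read off from the $\tx^\pm,\tho$-content and in turn determine $c,d$. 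Thus distinct ordered $S$-monomials are invertible scalar multiples of distinct PBW basis vectors of $U_q^{Dr}$, hence linearly independent.

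For spanning: since $U_q^{Dr,\triangleright,+}$ is generated by $S$, it suffices to straighten an arbitrary product of $S$-generators into ordered monomials using \eqref{alt1}--\eqref{alt4}. These relations are \emph{internal}: \eqref{alt3} keeps the $A^\pm$-indices nonnegative, the commutator $[\tho_k,A_\ell^\pm]=\pm\frac{[2k]_q}{k}A_{k+\ell}^\pm$ yields a single $A_{k+\ell}^\pm$, and \eqref{alt4} rewrites $A_k^+A_\ell^-$ as a multiple of $A_\ell^-A_k^+$ plus $B_{k+\ell}=\tK^{-1}\psi_{k+\ell}$, which is a polynomial in the $\tho_m$ ($m\ge1$); hence the rewriting never leaves the span of $S$-monomials. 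All relations are homogeneous for the $\mathbb{Z}\alpha\oplus\mathbb{Z}\delta$-grading inherited from $U_q^{Dr}$, and in a fixed degree $p\alpha+N\delta$ one has $\#A^-\le N$, $\#\tho\le N$ and $\#A^+=p+\#A^-\le p+N$, so each graded component contains only finitely many $S$-monomials. To prove termination I order the monomials of a fixed component lexicographically by the triple (number of $A^\pm$-factors; number of block-order violations involving a $\tho$, i.e.\ a $\tho$ to the left of an $A^-$, or an $A^+$ to the left of a $\tho$; a convexity/spread measure on the index multisets of the $A^+$- and $A^-$-blocks) and check that each relation strictly decreases it: \eqref{alt4} drops the first entry by two; the $\tho$--$A$ commutators fix the first entry and strictly lower the second (one term repairs a violation, the other deletes the offending $\tho$); and \eqref{alt3} fixes the first two entries and reduces the spread of an $A$-block exactly as in the PBW straightening of $U_q^+$. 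The rewriting therefore halts at ordered $S$-monomials, which consequently span.

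Combining the two halves yields the asserted basis. The step I expect to require the most care is the termination of the straightening: one must fix a genuinely well-founded term order under which \eqref{alt3}, the $\tho$--$A$ commutators, and especially \eqref{alt4} — whose right-hand side $B_{k+\ell}$ may be a $\tho$-polynomial of degree $>1$ and thus temporarily raises the $\tho$-count — are \emph{simultaneously} decreasing; the grading argument, which makes each graded component finite, is precisely what lets the three-level lexicographic measure above do the job.
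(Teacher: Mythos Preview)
Your approach is exactly the paper's: deduce the PBW basis of the alternating subalgebra from the known Beck--BCP PBW basis of $U_q^{Dr}$, using the orderings \eqref{pbw1}, \eqref{pbw2}. The paper states the proposition with essentially no proof beyond the sentence ``For the alternating subalgebras, PBW bases follow naturally'' after citing \cite{Beck,BCP}; your write-up supplies precisely the two details the paper leaves implicit (the bijection, up to an invertible scalar, between ordered $S$-monomials and a subset of the ambient PBW monomials for independence, and an internal straightening via \eqref{alt1}--\eqref{alt4} for spanning).
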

Using the automorphism (\ref{thetq}), PBW bases for $U_q^{Dr,\triangleright,-}$ and $U_q^{Dr,\triangleleft,+}$ are similarly obtained.\vspace{1mm}

Extensions of the alternating subalgebras are now introduced, that will be useful in the analysis below. 
\begin{defn}\label{defaltsl2qp}  ${U'_q}^{Dr,\triangleright,\pm}$ (resp. ${U'_q}^{Dr,\triangleleft,\pm}$) denote the subalgebras of $U_q^{Dr}$ generated by $U_q^{Dr,\triangleright,\pm}$ (resp. $U_q^{Dr,\triangleleft,\pm}$) and $\{\tK^{\pm 1},C^{\pm 1/2}\}$.
\end{defn}
If one considers for instance ${U'_q}^{Dr,\triangleright,+}$, in addition to the relations (\ref{alt1})-(\ref{alt4}) one has:
\beqa
&& \big[ \tho_k,\tK^{\pm 1}\big]= 0 \ , \quad \big[ B_k,\tK^{\pm 1}\big]= 0 \ ,\quad C^{1/2} \ \ \mbox{central}\ ,\label{alt5}\\
&&  \tK A_k^{\pm} \tK^{-1}  =  q^{\pm 2} A_k^{\pm} \ .\label{alt6}
\eeqa

\subsection{The homomorphism $\nu: U_q^{T,+} \rightarrow {U'_q}^{Dr,\triangleright,+}$}
Consider the following Freidel-Maillet type equation (for a non-symmetric R-matrix)
\begin{align} \tilde R_{12}(z/w)\ (\tilde K(z)\otimes {\mathbb I})\ R^{(0)}\ ({\mathbb I} \otimes \tilde K(w))\
= \ ({\mathbb I} \otimes \tilde K(w))\  R^{(0)}\ (\tilde K(z)\otimes {\mathbb I})\ \tilde R_{21}(z/w)\  .\label{RKz}
\end{align}
Assume there exists  a matrix $\tilde K^0(z)$ with scalar entries and two quantum Lax operators $ L(z) ,  L^{0}$, such that the following relations hold (recall that $\tilde R_{21}(z)= P\tilde R_{12}(z)P$):
\beqa \tilde R_{12}(z/w) \  \tilde K^0_1(z) \ R^{(0)}\ \tilde K^0_2(w)\
&=& \  \tilde K^0_2(w)  \ R^{(0)} \ \tilde K^0_1(z) \ \tilde R_{21}(z/w)\  ,\label{RKzinit} \\
\tilde R_{12}(z/w)    L_1(z)  L_2(w)   &=& L_2(w)   L_1(z) \tilde R_{12}(z/w) \ ,\label{RtLpLp}\\
\tilde R_{21}(z/w)   (L^{0})_1(L^{0})_2 &=& (L^{0})_2(L^{0})_1  \tilde R_{21}(z/w) \ ,\label{RL0L0}\\
(L^{0})_1R^{(0)}  L_2(w)  &=&  L_2(w)   R^{(0)} (L^{0})_1\ ,\label{L0R0L}\\
L_1(z) R^{(0)}(L^{0})_2  &=& (L^{0})_2R^{(0)} L_1(z)  \  .\label{LR0L0}
\eeqa
Adapting \cite[Proposition 2]{Skly88},   using the above relations one finds that  :
\beqa
\tilde K(z) \mapsto L(z\lambda) \tilde K^0(z) L^{0}\label{Ktz}
\eeqa
satisfies (\ref{RKz}) provided $\lambda$ is invertible and $[\lambda,U_q(\widehat{gl_2})]=0$. For instance, define:
\beqa
\tilde K^{0}(z) =  \begin{pmatrix} \bar\epsilon_+ & \frac{{\bar k}_+ (q+q^{-1})}{(q-q^{-1})}  \\
    \frac{{\bar k}_- (q+q^{-1})}{(q-q^{-1})}  & \frac{\bar\epsilon_-}{z}
      \end{pmatrix}\  ,\label{Ktilde0}
\eeqa
where ${\bar k}_\pm\in {\mathbb C}(q)$  and $\big[\bar\epsilon_\pm,U_q(\widehat{gl_2})\big]=0$. It satisfies (\ref{RKzinit}). It follows:
\begin{lem}\label{lem:ktmz} The K-operator 
\beqa
\tilde K(z) \mapsto \tilde K^-(z) = L^-(z \lambda) \tilde K^{0}(z)  L^{-,0}\label{Ktmz} \ 
\eeqa
satisfies (\ref{RKz}) for any invertible $\lambda$ such that $[\lambda,U_q(\widehat{gl_2})]=0$.
\end{lem}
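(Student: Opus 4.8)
The plan is to recognize the statement as a direct instance of the dressing recipe (\ref{Ktz}). That recipe, obtained by adapting Sklyanin's Proposition~2, asserts that whenever a triple consisting of a scalar matrix $\tilde K^0(z)$, a Lax operator $L(z)$ and a constant Lax operator $L^0$ satisfies (\ref{RKzinit})--(\ref{LR0L0}), the product $L(z\lambda)\tilde K^0(z)L^0$ solves the Freidel--Maillet equation (\ref{RKz}). So it suffices to take $L(z)=L^-(z)$, $L^0=L^{-,0}$ and $\tilde K^0(z)$ as in (\ref{Ktilde0}), and to verify for this concrete triple each of the five hypotheses. A preliminary step is to pin down the constant operator $L^{-,0}$ precisely (the boundary/zero-mode companion of $L^-(z)$ used in \cite[Subsection 5.2]{Bas20}), since the remaining relations are sensitive to its exact entries.

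First I would dispose of the two hypotheses that are essentially free. Relation (\ref{RKzinit}) for $\tilde K^0(z)$ has already been recorded after (\ref{Ktilde0}) by inserting (\ref{Ktilde0}) and the R-matrices (\ref{def:r}), (\ref{R0}). Relation (\ref{RtLpLp}) with $L(z)=L^-(z)$ is nothing but the FRT exchange relation (\ref{YBApm1}) in the $-$ sector, hence holds in $U_q(\widehat{gl_2})$ by Theorem~\ref{def:UqRS}. This leaves the three relations (\ref{RL0L0}), (\ref{L0R0L}) and (\ref{LR0L0}) involving $L^{-,0}$, which I would expand entrywise in $\mathrm{End}(\mathbb C^2\otimes\mathbb C^2)$. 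Since the twist $R^{(0)}=\mathrm{diag}(1,q^{-1},q^{-1},1)$ of (\ref{R0}) is diagonal, every matrix component collapses to a single commutation rule: (\ref{RL0L0}) forces the entries of $L^{-,0}$ to $q$-commute among themselves, while (\ref{L0R0L})--(\ref{LR0L0}) impose $q^{\pm1}$-commutation between the zero modes in $L^{-,0}$ and the currents $\tk_j^-(z),\te^-(z),\tf^-(z)$ of $L^-(z)$. All of these are consequences of the defining relations of $U_q(\widehat{gl_2})$ gathered in Theorem~\ref{def:UqRS}, once ${\bar k}_\pm$ are fixed by (\ref{rho}).

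The hard part will be this last verification: one must choose the constant operator $L^{-,0}$ so that the mutually constraining relations (\ref{RL0L0}) and (\ref{LR0L0}) hold simultaneously, and then confirm entry by entry that the resulting $q$-commutation rules are genuinely satisfied in $U_q(\widehat{gl_2})$. This requires care, because these two relations force opposite $q$-powers on the shared zero modes, which in particular rules out the naive identification $L^{-,0}=\lim_{z\to\infty}L^-(z)$; the correct constant operator of \cite{Bas20} must be used. Granting these checks, the general construction (\ref{Ktz}) applies verbatim and yields that $\tilde K^-(z)=L^-(z\lambda)\tilde K^{0}(z)L^{-,0}$ satisfies (\ref{RKz}). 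Finally, the freedom in $\lambda$ is immediate: the matrices $\tilde R_{12}$ and $\tilde R_{21}$ in (\ref{RKz}) depend only on the ratio $z/w$, which is left unchanged by $(z,w)\mapsto(z\lambda,w\lambda)$, so every invertible $\lambda$ with $[\lambda,U_q(\widehat{gl_2})]=0$ is admissible, completing the proof.
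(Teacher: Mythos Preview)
Your approach is exactly the paper's: reduce to the dressing recipe (\ref{Ktz}) and verify the five hypotheses (\ref{RKzinit})--(\ref{LR0L0}) for the triple $(\tilde K^0(z),L^-(z),L^{-,0})$. The paper's proof is equally brief on the actual checks, but it supplies the one datum you leave open: the constant operator is
\[
L^{-,0}=\mathrm{diag}\big((\tk_{2,0}^-)^{-1},(\tk_{1,0}^-)^{-1}\big),
\]
i.e.\ the \emph{inverse} zero modes with the diagonal entries swapped, not any limit of $L^-(z)$. Once this is written down, (\ref{RL0L0})--(\ref{LR0L0}) reduce to the $q$-commutation of $\tk_{i,0}^-$ with the currents, which follow from (\ref{YBApm1}); so your sketch becomes complete. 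One small slip: the verifications of (\ref{RL0L0})--(\ref{LR0L0}) do not involve $\bar k_\pm$ at all (those parameters sit only in $\tilde K^0(z)$, which enters through (\ref{RKzinit}) alone), so the clause ``once $\bar k_\pm$ are fixed by (\ref{rho})'' should be dropped.
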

\begin{proof} By previous comment, it is sufficient to check that (\ref{RtLpLp})-(\ref{LR0L0}) hold. For the choices 
\beqa
L(z) \mapsto  L^-(z)  \quad \mbox{and} \quad L^{0}\mapsto  L^{-,0}= diag( (\tk_{2,0}^-)^{-1}, (\tk^-_{1,0})^{-1})\ ,\label{Lm0i} 
\eeqa
 eq. (\ref{RtLpLp}) holds by definition and it is checked that eqs. (\ref{RL0L0})-(\ref{LR0L0}) hold.
\end{proof}

The R-matrices $R(u)$ (symmetric) and $\tilde R(z)$ (non-symmetric) given by (\ref{R}) and (\ref{def:r}), respectively, are related through the similarity transformations:
\beqa
\qquad \left(\frac{u}{v}q-\frac{v}{u}q^{-1}\right)^{-1} R_{12}(u/v) &=& \cal M(u)_1  \cal M(v)_2 \tilde R_{12}(u^2/v^2) \cal M(v)_2^{-1} \cal M(u)_1^{-1}\ ,\label{simil}\nonumber\\
&=& {\cal M}(u)_1^{-1}   {\cal M}(v)_2^{-1} \tilde R_{21}(u^2/v^2) {\cal M}(v)_2 {\cal M}(u)_1 \quad \mbox{with} \quad 
 {\cal M}(u)=   \begin{pmatrix}
         u^{-1/2} & 0 \\
    0   &  u^{1/2} 
                   \end{pmatrix} \ .\nonumber
\eeqa
Using this transformation, one relates (\ref{RKz}) to (\ref{RE}): there exists an injective homomorphism from the Freidel-Maillet algebra (\ref{RE}) to the Yang-Baxter algebra (\ref{YBApm0})-(\ref{YBApm2})  given by:
\beqa
K(u) \mapsto  \cal M(u) {\tilde K}^-(qu^2)   {\cal M}(u)\ .  \label{Kmu} 
\eeqa
%
%
%
%
The explicit expression for (\ref{Kmu}) is a generalization of the K-operator in \cite[Lemma 5.15]{Bas20}. Here the difference relies on  the additional elements $\bar\epsilon_\pm\neq 0$ in (\ref{Ktilde0}).  \vspace{1mm}

The map (\ref{Kmu}) allows to establish the precise relation between the equitable generators $ \{{y}^+_{-k}, {y}^+_{k+1},  {z}^+_{k+1}, {\tilde z}^+_{k+1}\}$ and the generators of alternating subalgebras. 
In the expressions below, for normalization convenience we set:
\beqa
&& {\bar k}_+=q^{-1}(q-q^{-1})\ ,\quad {\bar k}_-=q-q^{-1}\ ,\quad \bar\epsilon_+=q+q^{-1}\ ,\quad \bar\epsilon_-=q(q+q^{-1})C^{-1}\ ,\quad \lambda=C^{3/2}\ . \label{parfixed}
\eeqa
\begin{prop} \label{map1}  There exists an injective homomorphism $\nu:\ U_q^{T,+} \rightarrow {U'_q}^{Dr,\triangleright,+}$ such that: 
\beqa
&&{\cY}_+(u)\mapsto g(u) \left(     -{\bar k}_-(q^2+1) (qu^2)^{-1} \sum_{k=0}^\infty q^{k} C^{-k/2}\tK^{-1}{\tx}_{k}^+ (qu^2\lambda)^{-k}   + \bar\epsilon_+(qu^2)^{-1}\tK^{-1} \right)\ ,\label{im1}\\
&&{\cY}_-(u)\mapsto \left( -{\bar k}_+(q^{-2}+1) \sum_{k=0}^\infty q^{k+1} C^{(k+1)/2} {\tx}_{k+1}^- (qu^2\lambda)^{-k-1}  \right. \\
&& \qquad \qquad \quad \left. + \ \bar\epsilon_- q^{-1}(qu^2)^{-1}\left(\psi(u^2\lambda)  + (q-q^{-1})^2 \sum_{k,\ell =0}^\infty q^{k-\ell} C^{(k-\ell+1)/2}  {\tx}_{k+1}^- {\tx}_{\ell}^+ (qu^2\lambda)^{-k-\ell-1}\right) \right)g(u)\ ,\nonumber\\
%
&&{\cZ}_+(u)\mapsto   \left(\frac{\bar\rho}{q-q^{-1}} - \bar\epsilon_- {\bar k}_-q^{-1}(q^2-q^{-2}) (qu^2)^{-1} \sum_{k=0}^\infty q^{-k} C^{-k/2}{\tx}_{k}^+ (qu^2\lambda )^{-k} \right)g(u)  - \frac{\bar\rho}{q-q^{-1}}\ ,\\
&&{\cZ}_-(u)\mapsto g(u) \left(   \frac{\bar\rho}{q-q^{-1}} \tK^{-1}\psi(u^2\lambda)   
- \ \bar\epsilon_+ {\bar k}_+(q^2-q^{-2}) \sum_{k=0}^\infty q^{-k+1} C^{(k+1)/2}\tx_{k+1}^-\tK^{-1} (qu^2\lambda)^{-k-1}\right.\label{im4}\\
&& \qquad \qquad \qquad \qquad \left.  +\  \bar\rho(q-q^{-1})  \sum_{k,\ell =0}^\infty q^{-k+\ell} C^{(k-\ell+1)/2} \tK^{-1}  {\tx}_{k+1}^- {\tx}_{\ell}^+ (qu^2\lambda)^{-k-\ell-1}  \right)  - \frac{\bar\rho}{q-q^{-1}}\ ,\nonumber
\eeqa
where
\beqa
&&g(u) = \exp\left( -(q-q^{-1}) \sum_{n=1}^\infty \frac{(\tho_n + \gamma'_n)}{q^n+q^{-n}} (qu^2\lambda)^{- n}\right)\ \quad \mbox{with}\quad 
\gamma'_n = -\frac{(q-q^{-1})^{2n-1}}{n}\left( \frac{\bar\epsilon_+\bar\epsilon_-\lambda}{\bar\rho q}\right)^n\ .\label{gamp}
\eeqa
\end{prop}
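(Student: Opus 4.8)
The plan is to realize the defining $K$-operator of \thmref{thm1} through the dressing construction of \lemref{lem:ktmz}, to read off the four generating functions entry by entry so as to obtain \er{im1}--\er{im4}, and then to prove injectivity by a PBW leading-term comparison. First I would set
\[
K(u)=\cal M(u)\,\tilde K^-(qu^2)\,\cal M(u),\qquad \tilde K^-(z)=L^-(z\lambda)\,\tilde K^0(z)\,L^{-,0},
\]
with the scalar parameters fixed as in \er{parfixed}. By \lemref{lem:ktmz} the operator $\tilde K^-(z)$ solves the Freidel--Maillet equation \er{RKz}, and by \er{Kmu} this produces a solution $K(u)$ of \er{RE}. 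Consequently the assignment $\nu$ given by \er{im1}--\er{im4} respects the relations \er{def1}--\er{def11}; the only remaining point for well-definedness is the quantum-determinant condition \er{condqdet}.

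Next I would compute the images explicitly. Conjugating the triple product $L^-(qu^2\lambda)\,\tilde K^0(qu^2)\,L^{-,0}$ by $\cal M(u)$ scales the diagonal entries by $u^{-1}$ and $u$ and leaves the off-diagonal entries unchanged, so matching against the ansatz \er{K} identifies $\cY_\pm(u)$ with the diagonal entries and $\cZ_\pm(u)$ with the off-diagonal ones. I would then substitute the generating functions \er{eq:cuDre}--\er{eq:cuDrk} together with the factorization \er{kpmz} and the decomposition \er{a1mbis}; the scalar Cartan factor $\tk_1^-(qu^2\lambda)$ produces, under $\gamma'_D$, the exponential prefactor $g(u)$ of \er{gamp}, and collecting the double sums coming from the entry $\te^-\tk_1^-\tf^-$ of $L^-$ yields the series in \er{im1}--\er{im4}. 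This part is bookkeeping.

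The main obstacle will be the quantum-determinant condition. I would insert the product form of $K(u)$ into the element $\cal C(u)$ of \er{Zvee}, whose vanishing is equivalent to \er{condqdet} by the proof of \thmref{thm1}. Using the exchange relations \er{YBApm1}--\er{YBApm2} and the fact that the quantum determinant of $L^-$ is central and equals a scalar under $\gamma'_D$ (with $\tk_{1,0}^-\tk_{2,0}^-\mapsto 1$ by \er{gamD3}), the computation should collapse to the quantum determinant of the constant matrix $\tilde K^0$. With \er{parfixed} and the normalization \er{rho}, namely $\bar\rho={\bar k}_+{\bar k}_-(q+q^{-1})^2$, I expect the spectral-parameter-dependent contribution to cancel between the arguments $u$ and $uq$, leaving exactly the constant $-\bar\rho/(q-q^{-1})^2$; the same computation is what forces the precise value of $\gamma'_n$ recorded in \er{gamp}. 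This is the delicate step, because it is where the constants $\bar\epsilon_\pm,{\bar k}_\pm,\lambda,\gamma'_n$ must conspire and where the terms $\bar\epsilon_\pm\neq 0$ (absent in \cite{Bas20}) genuinely enter.

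Finally, injectivity. By \propref{pbw}, $U_q^{T,+}$ has a PBW basis built from the families $\{y^+_{-k}\}$, $\{z^+_{n+1}\}$, $\{y^+_{\ell+1}\}$, while by \propref{pbwalt} the target ${U'_q}^{Dr,\triangleright,+}$ has a PBW basis in the alternating generators $A^-_\ell=C^{\ell/2}\tx_\ell^-$, $\tho_m$, $A^+_k=C^{-k/2}\tK^{-1}\tx_k^+$ ordered as in \er{pbw1}, together with $\tK^{\pm1}$ and $C^{\pm1/2}$. From \er{im1}--\er{im4}, the lowest-degree part of $\nu(y^+_{\ell+1})$ is a nonzero scalar multiple of $A^+_\ell$, that of $\nu(y^+_{-k})$ a multiple of $A^-_{k+1}$, and that of $\nu(z^+_{n+1})$ a multiple of $\tho_{n+1}$, the exponential factors $g(u)$ and $\tK^{-1}\psi$ contributing only strictly higher-degree corrections. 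Since these three families exhaust the alternating PBW generators of $U_q^{Dr,\triangleright,+}$, distinct PBW monomials of $U_q^{T,+}$ map to elements with distinct linearly independent leading PBW monomials, so $\nu$ is injective.
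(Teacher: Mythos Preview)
Your overall strategy---realize $K(u)$ via \lemref{lem:ktmz} and \er{Kmu}, read off the entries to get \er{im1}--\er{im4}, then check the quantum-determinant constraint---is exactly the paper's. The bookkeeping for the four currents is the same computation the paper does (illustrated there on $(\tilde K^-(z))_{11}$, with the reordering rules \er{exchxg} and the identification of $g(u)$ via $\gamma'_D$).

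The genuine divergence is in your treatment of the constraint \er{condqdet}. You propose that the Freidel--Maillet quantum determinant of the dressed operator ``collapses'' to the quantum determinant of the scalar block $\tilde K^0$, with the $L^-$ contribution disappearing because its own quantum determinant is central and trivialized by $\gamma'_D$. But this cannot be the full story: if the answer were literally $\mathrm{qdet}\,\tilde K^0$, it would carry no dependence on $\gamma'_n$, and you would have no equation fixing \er{gamp}. What the paper actually does is different and more concrete. It uses the fact (from \cite{Bas20}) that $\nu({\cal C}(u))$ is central in ${U'_q}^{Dr,\triangleright,+}$, hence a function of $C^{1/2}$ alone. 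Then, in the explicit images \er{im1}--\er{im4}, it extracts only the pieces that survive projection onto the center (relative to the PBW ordering \er{pbw1}); each current contributes a term proportional to the scalar series $c(u)$ of \er{cu}, and one obtains the factorized formula
\[
\nu({\cal C}(u))-\tfrac{\bar\rho}{q-q^{-1}}=\Bigl((q-q^{-1})q^{-2}\bar\epsilon_+\bar\epsilon_-(qu^2)^{-1}-\tfrac{\bar\rho}{q-q^{-1}}\Bigr)c(u)c(uq).
\]
Setting this to zero is a condition on $c(u)c(uq)$ alone, and taking the logarithm gives \er{gamp}. So the $\gamma'_n$ enter through the residual factor $c(u)c(uq)$, not through any multiplicativity of the dressed determinant; your ``collapse'' heuristic misses this mechanism.

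On injectivity you go further than the paper, which simply invokes the injectivity of \er{Kmu} stated just before the proposition (ultimately relying on \cite{Bas20}). Your PBW leading-term idea is a reasonable self-contained substitute, but your identification of leading terms is not quite right as stated. Because $\bar\epsilon_\pm\neq 0$, each $\nu(y^+_{\ell+1})$ and $\nu(y^+_{-k})$ picks up a Cartan piece (e.g.\ $y_1^+\mapsto\tK^{-1}-q(q-q^{-1})A_0^+$), and $\nu(z^+_{n+1})$ has top part $\tx^-\tK^{-1}\tx^+$ rather than $\tho_{n+1}$ (see the image of $z_1^+$ in Example~\ref{exi1}). To make the argument go through you should filter by total degree in the $\tx^\pm_k$ and show the associated graded map is injective; the claim then becomes that the top-degree parts of $\nu(y^+_{-k}),\nu(y^+_{\ell+1}),\nu(z^+_{n+1})$ hit, up to units in the Cartan, the PBW generators $A^-_{k+1}$, $A^+_\ell$, and $A^-_{n+1}A^+_n$ (which in the associated graded differs from $\tho$-type elements only by a commutator), respectively.
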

\begin{proof} The first part of the proof concerns the derivation of the expressions on the r.h.s of (\ref{im1})-(\ref{im4}).
Recall Lemma \ref{lem:ktmz}. Then, one expands explicitly (\ref{Ktmz}) using (\ref{Lpm}). Consider for instance the entry $({\tilde K}^-(z))_{11}$, where some commutation relations given in \cite[eqs. (5.55)-(5.56)]{Bas20} are used: 
\beqa
(\tilde K^-(z))_{11} &=&   \frac{{\bar k}_-(q+q^{-1})}{q-q^{-1}} \tk_1^-(z\lambda)\underbrace{\tf^-(z\lambda)  (\tk^-_{2,0})^{-1}}_{= q  (\tk^-_{2,0})^{-1} \tf^-(z\lambda) } + \bar\epsilon_+ \tk_1^-(z\lambda)(\tk^-_{2,0})^{-1}   
\nonumber\\
&=&    \frac{{\bar k}_-(q+q^{-1})}{q-q^{-1}} \!\!\!\!\!\!\!\! \!\!\!\!\!\!\!\!\!\!\!\!\underbrace{\tk_1^-(z\lambda)(\tk^-_{2,0})^{-1}}_{=\tK^{-1}  \exp\left( -(q-q^{-1}) \sum_{n=1}^\infty a_{1,n} (z\lambda)^{- n}\right)  } \!\!\!\!   \!\!\!\!\!\!\!\! \!\!\!\!\!\!\!\! \tf^-(z\lambda) + \bar\epsilon_+ \underbrace{\tk_1^-(z\lambda)(\tk^-_{2,0})^{-1}}_{}       \qquad \mbox{by} \quad (\ref{kpmz})\nonumber\ .
\eeqa
Inserting  (\ref{eq:cuDrf}),  one gets:
\beqa
(\tilde K^-(z))_{11} &=&  -{\bar k}_-(q+q^{-1}) \exp\left( -(q-q^{-1}) \sum_{n=1}^\infty a_{1,n} (z\lambda)^{- n}\right) \sum_{k=0}^\infty q^{k} q^{-ck/2}\tK^{-1}{\tx}_{k}^+ (z\lambda)^{-k}  \label{K11mz}\\
&& \  + \ \bar\epsilon_+  \tK^{-1}  \exp\left( -(q-q^{-1}) \sum_{n=1}^\infty a_{1,n} (z\lambda)^{- n}\right)   \nonumber\ .
\eeqa
Applying $\gamma'_D$ according to (\ref{gamD0})-(\ref{gamD3}), one finds $\gamma'_D\left(\tilde{K}^-(z)_{11}\right)$ is a power series in the elements of ${U'_q}^{Dr,\triangleright,+}$. Proceeding similarly for the other entries,  $\gamma'_D\left(K^-(z)_{ij}\right) \in {U'_q}^{Dr,\triangleright,+}\otimes {\mathbb C}[[z]]$. Also, the entries are reordered using the defining relations for Drinfeld's currents \cite{DF93}. In particular, one introduces (\ref{psi}) and uses:
\beqa
\tx_{\ell+1}^-g(u) = q^{-2\ell} g(u)\tx_{\ell+1}^-\ ,\qquad  \tx_{\ell}^+g(u) = q^{2\ell} g(u)\tx_{\ell}^+\ .\label{exchxg}
\eeqa
Then, using (\ref{Kmu})  one compares (\ref{K}) to $\cal M(u) \gamma'_D\left({\tilde K}^-(qu^2)\right)   {\cal M}(u)$. 
This gives  (\ref{im1})-(\ref{im4}). 

The second part of the proof concerns the identification of the elements $\gamma'_n$ such that (\ref{condqdet}) holds, i.e. (\ref{Zveecond}). 
In the r.h.s. of (\ref{Zvee}), insert the explicit expressions previously obtained to get the image of 
${\cal C}(u)$ in ${U'_q}^{Dr,\triangleright,+}$. By \cite[Corollary 3.4, Remark 3.5]{Bas20} (recall the substitutions (\ref{cor1}), (\ref{cor2})),  $\nu({\cal C}(u))$ is central: it can be reduced to a function of $C^{1/2}$. To determine this function, it is sufficient to extract all terms of $\nu({\cal C}(u))$ that belong to the center. According to the ordering (\ref{pbw1}) and the reduction rules (\ref{gl1})-(\ref{gl4}), one identifies the subset of terms in the images of $\{{\cY}_\pm(u),{\cZ}_\pm(u)\}$ that are relevant. One finds:
\beqa
&&{\cY}_+(u)\mapsto \bar\epsilon_+ \tK^{-1} (qu^2)^{-1} c(u) + \cdots\ ,\qquad\quad
{\cY}_-(uq)\mapsto \bar\epsilon_- q^{-3}\tK (qu^2)^{-1} c(uq) + \cdots \ ,\nonumber\\
&&{\cZ}_+(uq)\mapsto  \frac{\bar\rho}{(q-q^{-1})} (c(uq) - 1)+ \cdots  \ ,\qquad
{\cZ}_-(u)\mapsto  \frac{\bar\rho}{(q-q^{-1})} (c(u) - 1)+ \cdots  \ ,\nonumber
\eeqa
where 
\beqa
c(u) =  \exp\left( -(q-q^{-1}) \sum_{n=1}^\infty \frac{\gamma'_n}{q^n+q^{-n}} (qu^2\lambda)^{- n}\right)\ ,\label{cu}
\eeqa
and the `dots' correspond to terms that will not contribute. After simplifications, one gets the factorized expression:
\beqa
\nu({\cal C}(u)) - \frac{\bar\rho}{(q-q^{-1})} =  \left((q-q^{-1})q^{-2}\bar\epsilon_+\bar\epsilon_- (qu^2)^{-1} - \frac{\bar\rho}{(q-q^{-1})}\right) c(u)c(uq) \ .\label{nuC}
\eeqa     
The condition $\nu({\cal C}(u)) =0$ leads to 
\beqa
\exp\left( -(q-q^{-1}) \sum_{n=1}^\infty (q\lambda)^{-n}\gamma'_n (qu^2)^{- n}\right) = 1- \frac{(q-q^{-1})^2}{\bar\rho q^2}\bar\epsilon_+\bar\epsilon_-(qu^2)^{-1}\ .
\eeqa
Taking the logarithm on both sides, the corresponding formal power series are identified. It yields to (\ref{gamp}). 

Finally, from Theorem \ref{thm:Tp}, Lemmas \ref{lem:1}, \ref{lem:2}, one knows that $U_q^{T,+} \cong U_q^{DJ,+}$. Then, by lemma \ref{isoUqDrDJp} and Definition \ref{defaltsl2qp}, the map $\nu$ is not surjective.
\end{proof}

Recall (\ref{c1}), (\ref{c2}). Identifying the leading terms of the power series, from the proposition above with (\ref{parfixed}) one gets for instance: 
\begin{example}\label{exi1} The image in ${U'_q}^{Dr,\triangleright,+}$ of $U_q^{T,+}$ is such that:
\beqa
y_0^+ &\mapsto& C^{-1}\tK -q^{-1}(q-q^{-1})C^{-1} \tx_1^- \ ,\nonumber\\ 
y_1^+  &\mapsto& \tK^{-1} -q(q-q^{-1}) \tK^{-1} \tx_0^+ \ ,\nonumber\\
{\tilde z}_1^+ &\mapsto& -(q-q^{-1})^2 \left( q^{-1} C^{-3/2}\tho_1 +(q+q^{-1}) C^{-1}\tx_0^+ \right) + (q-q^{-1})C^{-1}\ ,\nonumber \\
z_1^+ &\mapsto&  (q-q^{-1})^2 \left( q C^{-3/2}\tho_1  -(q+q^{-1}) C^{-1}\tx_1^-\tK^{-1} + q(q^2-q^{-2})C^{-1}\tx_1^- \tK^{-1}\tx_0^+\right) + (q-q^{-1})C^{-1}\ .\nonumber
\eeqa
\end{example}
Using (\ref{isoCP}), it is checked that the images of $\{y_i^+\}_{i=0,1}$ match with (\ref{eq:isol3}).

\begin{rem} Alternative expressions for (\ref{im1})-(\ref{im4}) can be written using the commutations relations (\ref{exchxg}) and $\big[\psi(z),\tK^{\pm 1}\big]= \big[\psi(z),g(u)\big]=0$. 
\end{rem}

\begin{rem}\label{rem:root} The image of the equitable generators in terms of Lusztig's root vectors and $\{K_0,K_1\}$ is obtained as follows. According to the definitions of the root vectors $\{E_{n\delta+\alpha_i}, E_{n\delta}|i=0,1\}\in U_q^{DJ,+}$  (or $\{F_{n\delta+\alpha_i}, F_{n\delta}|i=0,1\}\in U_q^{DJ,-}$) given in \cite{Beck,BCP}, one uses the correspondence:  
\beqa
\tx^+_k &=& E_{k\delta + \alpha_1}\ ,\qquad
\tx^-_{k+1} =- C^{-k-1}\tK E_{k\delta + \alpha_0}\ ,\qquad \qquad\tho_{k+1} = C^{-(k+1)/2} E_{(k+1)\delta}\ ,\label{imr1}\\
\tx^-_{-k} &=& F_{k\delta + \alpha_1}\ ,\qquad
\tx^+_{-k-1} = -  F_{k\delta + \alpha_0}\tK^{-1} C^{k+1}\ ,\qquad  \tho_{-k-1} = C^{(k+1)/2} F_{(k+1)\delta}\label{imr2}
\eeqa
for $k\in {\mathbb N}$ and $\tK =K_1$, $C\tK^{-1}=K_0$ in Proposition \ref{map1}, where $\bar k_\pm,\bar\epsilon_\pm,\lambda$, are chosen such that (\ref{eq:isol3}) is recovered at the leading order of the power series.  
\end{rem}

\subsection{The homomorphism $\delta: U_q^{T,+} \rightarrow {U'_q}^{Dr,\triangleright,+} \otimes U_q^{T,+}$} For $U_q^{T,+}$, a comodule algebra structure can be exhibited as follows. Starting from any K-operator satisfying (\ref{RKz}) and following standard arguments \cite{Skly88}, left or right coactions can be constructed using the FRT presentation. Consider the K-operator in the r.h.s. of (\ref{Ktmz}). A new K-operator can be constructed using a dressing procedure \cite{Skly88}, which leads naturally to a left or right coaction map. For instance\footnote{With respect to the ordering $V_{[1]}\otimes V_{[2]}$:
\beqa
((T)_{[\textsf 1]}(T')_{[\textsf 2]}(T'')_{[\textsf 1]})_{ij} =\sum_{k,\ell=1}^2  (T)_{ik}(T'')_{\ell j}\otimes (T')_{k\ell}  \ .
\eeqa
} 
\begin{prop}\label{prop:delta} $U_q^{T,+}$ is a left comodule algebra	 over ${U'_q}^{Dr,\triangleright,+}$ with coaction map $\delta: U_q^{T,+} \rightarrow {U'_q}^{Dr,\triangleright,+} \otimes U_q^{T,+}$ such that
\beqa
\delta(\tilde K^-(z)) = (\gamma'_D \otimes \gamma'_D)\left((L^-(z q^{(1 \otimes c/2)}))_{[\textsf 1]}  (\tilde K^-(zq^{(c/2 \otimes 1)}))_{[\textsf 2]} 
 ( L^{-,0})_{[\textsf 1]}\right)   \ .\label{coactK}
\eeqa
\end{prop}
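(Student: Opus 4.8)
The plan is to verify the two defining properties of a left comodule algebra structure: coassociativity and counitality, and crucially that the map $\delta$ is a genuine algebra homomorphism compatible with the Freidel-Maillet relation \er{RKz}. First I would observe that the formula \er{coactK} is obtained from the dressing construction of Sklyanin \cite{Skly88}: given that $\tilde K^-(z)=L^-(z\lambda)\tilde K^0(z)L^{-,0}$ solves \er{RKz} by \lemref{lem:ktmz}, one dresses the $L$-operator part by its coproduct \er{coprodUqgl2} while leaving $\tilde K^0(z)$ and the constant factor $L^{-,0}$ untouched on the coacting side. Concretely, using the notation of the footnote, the right-hand side of \er{coactK} reads entrywise
\beqa
\delta(\tilde K^-(z))_{ij} = \sum_{k,\ell=1}^2 \gamma'_D\big((L^-(zq^{(1\otimes c/2)}))_{ik}(L^{-,0})_{\ell j}\big) \otimes \gamma'_D\big((\tilde K^-(zq^{(c/2\otimes 1)}))_{k\ell}\big)\ ,
\eeqa
so the first tensor factor lands in ${U'_q}^{Dr,\triangleright,+}$ and the second in $U_q^{T,+}$, as required for the target of the coaction.

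The key step is to check that $\delta$ preserves the Freidel-Maillet relation \er{RKz}, i.e. that applying $\delta$ to both sides produces a valid identity in ${U'_q}^{Dr,\triangleright,+}\otimes U_q^{T,+}$. I would carry this out by substituting the dressed operator into \er{RKz} and splitting the verification into two independent pieces: the intertwining relations \er{RtLpLp}–\er{LR0L0} governing how $L^-$, $L^{-,0}$ and $R^{(0)}$ commute past one another, together with the already-established fact that $\tilde K^-(z)$ itself satisfies \er{RKz}. This is precisely the same algebraic mechanism used in the proof of \lemref{lem:ktmz}, now applied one level up in the tensor product: the $L$-factors slide through the $R$-matrices using the Yang-Baxter-type relations \er{YBApm1}, while the inner $\tilde K^-$-factors reproduce \er{RKz}. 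The compatibility of the central shifts $zq^{(1\otimes c/2)}$ and $zq^{(c/2\otimes 1)}$ with the spectral-parameter arguments is guaranteed because $\Delta(L^\pm(z))$ in \er{coprodUqgl2} is itself an algebra homomorphism intertwining $\tilde R$, so no new relations are needed beyond those listed.

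Next I would verify the coassociativity condition $(\id\otimes\delta)\circ\delta=(\Delta\otimes\id)\circ\delta$ and the counit condition $({\cal E}\otimes\id)\circ\delta=\id$. Both reduce to the coassociativity and counitality of $\Delta$ on the $L^-$-factors established in \thmref{def:UqRS}: iterating \er{coprodUqgl2} and using the coassociativity of the Hopf structure on $U_q(\widehat{gl_2})$ yields coassociativity of $\delta$, while ${\cal E}(L^-(z))=\mathbb I$ from \er{counitUqgl2} collapses the dressing factor $L^-(zq^{(1\otimes c/2)})L^{-,0}$ to the identity (since $L^{-,0}=\mathrm{diag}((\tk^-_{2,0})^{-1},(\tk^-_{1,0})^{-1})$ is inverted by the counit on the relevant generators), recovering $\tilde K^-$ and hence $\id$. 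Finally, since the generators of $U_q^{T,+}$ arise as the coefficients of the generating functions \er{c1}–\er{c2} packaged into the entries of $K(u)$, and $K(u)$ is related to $\tilde K^-$ through the invertible similarity transformation \er{Kmu}, the comodule structure on $\tilde K^-$ transports to one on $U_q^{T,+}$; that $\delta$ respects products follows because \er{RKz} is the complete set of defining relations, so a map preserving \er{RKz} automatically extends to an algebra homomorphism on the whole algebra.

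The main obstacle I expect is the careful bookkeeping of the central-charge shifts in the spectral parameters: the coproduct \er{coprodUqgl2} inserts factors $q^{\pm(c/2\otimes 1)}$ and $q^{\pm(1\otimes c/2)}$, and one must check that these shifts are consistent with the $R^{(0)}$ insertions and the relation \er{YBApm2} (whose expansion direction is constrained) rather than with \er{YBApm1}. Getting the braiding of the two copies of $c$ to match the non-symmetric structure of \er{RKz}—where $\tilde R_{12}(z/w)$ appears on the left and $\tilde R_{21}(z/w)$ on the right—is the delicate point, since an incorrect placement of $q^{\pm c/2}$ would break the intertwining. I would handle this by tracking each central factor through the computation and confirming that the dressed $\tilde K^-$ still solves \er{RKz} with the same (unshifted) argument $z/w$, exactly as in the $C=1$ specialization discussed after the proposition.
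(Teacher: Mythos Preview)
Your proposal is correct and follows essentially the same approach as the paper's proof: the paper also invokes the Sklyanin dressing construction to conclude that the right-hand side of \er{coactK} satisfies \er{RKz} ``by construction,'' then verifies the coassociativity axiom $(\Delta\otimes\id)\circ\delta=(\id\otimes\delta)\circ\delta$ via the coproduct rule \er{coprodUqgl2} and the counit axiom $({\cal E}\otimes\id)\circ\delta\cong\id$ via \er{counitUqgl2}. Your treatment is in fact more detailed than the paper's---you spell out why the intertwining relations \er{RtLpLp}--\er{LR0L0} make the dressed operator satisfy \er{RKz}, and you flag the central-charge bookkeeping as the delicate point---whereas the paper compresses all of this into a single sentence.
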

\begin{proof} By construction, the r.h.s. satisfies  (\ref{RKz}) for the non-symmetric R-matrix (\ref{def:r}). For $U_q^{T,+}$ to be a comodule algebra, we need to check:
\beqa
(\Delta \otimes id) \circ \delta &=& (id \otimes \delta ) \circ \delta \ ,\label{ax1}\\
({\cal E} \otimes id) \circ \delta &\cong& id \ .\label{ax2}
\eeqa
Firstly, consider (\ref{ax1}). Apply the l.h.s. of (\ref{ax1}) to $\tilde K^-(z)$ in (\ref{Ktmz}) and use the Lax operator coproduct rule (\ref{coprodUqgl2}). Compare the result with the r.h.s. of (\ref{ax1}) applied on $\tilde K^-(z)$. Both expressions coincide. Secondly, consider (\ref{ax2}).
Apply the l.h.s of  (\ref{ax2}) to $\tilde K^-(z)$ in  (\ref{Ktmz}) and use the counit rule (\ref{counitUqgl2}). Thus, we conclude that $U_q^{T,+}$ is a left comodule algebra.
\end{proof}

If needed, the image of the equitable generators by $\delta$ can be extracted in a straightforward manner. As an example, for simplicity let us consider a specialization of (\ref{coactK}), namely the left coaction map $\bar \delta: U_q^{T,+} \rightarrow {U'_q}^{Dr,\triangleright,+}/_{C=1} \otimes U_q^{T,+}$. For the symmetric R-matrix (\ref{R}) using (\ref{simil}) and (\ref{coactK}) at $c=0$ (i.e. $C=1$) it yields to: 
\beqa
\bar\delta( K(u)) = (\gamma'_D \otimes 1)\left(\left( \cal M(u)   L^- (qu^2)  \cal M(u)^{-1}  \right)_{[\textsf 1]}  
(K(u))_{[\textsf 2]}
(L^{-,0})_{[\textsf 1]}\right)/_{C_{[\textsf 1]}=1}\ .\label{deltaKmu}
\eeqa
Now, recall the generating functions (\ref{c1}), (\ref{c2}). 
\begin{lem} \label{coprodform} There exists a coaction map $\bar\delta:  \ U_q^{T,+} \rightarrow  {U'_q}^{Dr,\triangleright,+}/_{C=1} \otimes U_q^{T,+}$ such that:
\beqa
\bar\delta(\cY_+(u)) &\mapsto&  ( qu^2)^{-1}q
 \gamma'_D\left(\tk^-_{1}(qu^2)  (\tk^-_{2,0})^{-1} \tf^-(qu^2)\right)/_{C=1} \otimes  \left( \frac{1}{{\bar k}_+(q+q^{-1})}        \cZ_-(u)     + \frac{{\bar k}_-(q+q^{-1})}{(q-q^{-1})}      \right) \nonumber\\
&&  + \  \gamma'_D\left(\tk^-_{1}(qu^2)  (\tk^-_{2,0})^{-1} \right)/_{C=1}  \otimes  \cY_+(u) \ ,\nonumber\\
\qquad \bar\delta(\cY_-(u) )&\mapsto&   q^{-1} \gamma'_D\left(\te^-(qu^2) \tk^-_{1}(qu^2)  (\tk^-_{1,0})^{-1}\right)/_{C=1} \otimes 
 \left( \frac{1}{{\bar k}_-(q+q^{-1})}        \cZ_+(u)     + \frac{{\bar k}_+(q+q^{-1})}{(q-q^{-1})}     \right)\nonumber\\
&&+\ \gamma'_D\left(    \tk^-_2(qu^2)(\tk^-_{1,0})^{-1} + \  q^{-1} \te^-(qu^2) \tk_{1}^-(qu^2) (\tk^-_{1,0})^{-1}\tf^-(qu^2)  \right)/_{C=1} \otimes \cY_-(u)\ ,\nonumber
\eeqa
\beqa
\bar\delta(\cZ_+(u)) &\mapsto&   \gamma'_D\left(\tk^-_{1}(qu^2)  (\tk^-_{1,0})^{-1}\right)/_{C=1} \otimes \cZ_+(u)  
+ \frac{\bar\rho} {q-q^{-1}}\left( \gamma'_D\left(\tk^-_{1}(qu^2)  (\tk^-_{1,0})^{-1}\right) -1 \right)/_{C=1}    \otimes  1\nonumber\\
&&+\  {\bar k}_-(q+q^{-1})  \gamma'_D\left(\tk^-_{1}(qu^2)  (\tk^-_{1,0})^{-1} \tf^-(qu^2) \right)/_{C=1} \otimes \cY_-(u) \ ,\nonumber\\
\bar\delta(\cZ_-(u)) &\mapsto&  \gamma'_D\left( \tk^-_{2}(qu^2)  (\tk^-_{2,0})^{-1} +     q\te^-(qu^2) k_1^-(qu^2)(\tk^-_{2,0})^{-1} \tf^-(qu^2)         \right)/_{C=1}\otimes \cZ_-(u) \nonumber\\
&&+ \frac{\bar\rho} {q-q^{-1}}\gamma'_D\left(     \tk^-_{2}(qu^2)  (\tk^-_{2,0})^{-1} +     q\te^-(qu^2) k_1^-(qu^2)(\tk^-_{2,0})^{-1} \tf^-(qu^2)       -1  \right)/_{C=1}  \otimes  1\nonumber\\
&&+\  {\bar k}_+qu^2(q+q^{-1})\gamma'_D\left( \te^-(qu^2) \tk^-_{1}(qu^2)  (\tk^-_{2,0})^{-1} \right)/_{C=1} \otimes \cY_+(u) \nonumber
 \ .\nonumber
\eeqa
\end{lem}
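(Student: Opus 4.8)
The plan is to derive the four displayed formulas by expanding the specialized coaction identity (\ref{deltaKmu}) entry by entry and matching the result against the definition (\ref{K}) of $K(u)$; the comodule axioms themselves are already secured by Proposition \ref{prop:delta}, so nothing beyond this computation is needed. First I would write out the dressed Lax operator occupying the first tensor leg of (\ref{deltaKmu}). Since $\cal M(u)=\mathrm{diag}(u^{-1/2},u^{1/2})$, conjugation by $\cal M(u)$ fixes the diagonal entries of $L^-(qu^2)$ and rescales the $(1,2)$ and $(2,1)$ entries by $u^{-1}$ and $u$; using (\ref{Lpm}) this gives
\begin{align*}
\cal M(u)\, L^-(qu^2)\,\cal M(u)^{-1}=\begin{pmatrix} \tk_1^-(qu^2) & u^{-1}\tk_1^-(qu^2)\tf^-(qu^2)\\ u\,\te^-(qu^2)\tk_1^-(qu^2) & \tk_2^-(qu^2)+\te^-(qu^2)\tk_1^-(qu^2)\tf^-(qu^2)\end{pmatrix}.
\end{align*}

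Next I would apply the ordered triple-product rule of the footnote to (\ref{deltaKmu}), taking $T$ to be the matrix just computed, $T'=K(u)$, and $T''=L^{-,0}=\mathrm{diag}((\tk_{2,0}^-)^{-1},(\tk_{1,0}^-)^{-1})$ from (\ref{Lm0i}). Because $T''$ is diagonal, the sum over $\ell$ collapses and each entry becomes $\bar\delta(K(u))_{ij}=\sum_{k}\gamma'_D\big(T_{ik}\,(L^{-,0})_{jj}\big)/_{C=1}\otimes K(u)_{kj}$, a sum of exactly two terms. I would then read off the four entries of $K(u)$ from (\ref{K}), namely $K(u)_{11}=uq\cY_+(u)$, $K(u)_{22}=uq\cY_-(u)$, and the off-diagonal entries as affine combinations of $\cZ_\pm(u)$ with the scalars ${\bar k}_\pm(q+q^{-1})/(q-q^{-1})$. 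Dividing the diagonal entries by $uq$ and solving the off-diagonal ones for $\cZ_\pm(u)$ (multiplying through by the appropriate ${\bar k}_\mp(q+q^{-1})$) turns the single matrix identity into the four scalar identities for $\bar\delta(\cY_\pm(u))$ and $\bar\delta(\cZ_\pm(u))$.

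The remaining work is purely bookkeeping. To reach the operator orderings displayed in the statement I would commute $\tf^-$ past the zero modes using $\tf^-(qu^2)(\tk_{2,0}^-)^{-1}=q(\tk_{2,0}^-)^{-1}\tf^-(qu^2)$ and $\tf^-(qu^2)(\tk_{1,0}^-)^{-1}=q^{-1}(\tk_{1,0}^-)^{-1}\tf^-(qu^2)$, both of which follow from the $RLL$ relations (\ref{YBApm1}) and the first of which already appears in the proof of Proposition \ref{map1}; the emerging powers of $q$ combine with the $u^{\mp1}$ from the dressed off-diagonal entries and the $uq$ stripped from the diagonal entries to produce precisely the prefactors $(qu^2)^{-1}q$, $q^{-1}$, ${\bar k}_-(q+q^{-1})$ and ${\bar k}_+qu^2(q+q^{-1})$ in the four formulas. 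The $\otimes 1$ terms attached to $\bar\delta(\cZ_\pm(u))$ must be assembled from two contributions: the diagonal $L^-$-entry in the first leg multiplying the scalar part of $K(u)_{12}$ or $K(u)_{21}$ sitting in the second leg (yielding a $\gamma'_D(\cdots)\otimes1$ piece), and the coaction applied to that same scalar part (yielding $-\bar\rho/(q-q^{-1})\,(1\otimes1)$ once $\bar\rho={\bar k}_+{\bar k}_-(q+q^{-1})^2$ from (\ref{rho}) is used); together these give the $\big(\gamma'_D(\cdots)-1\big)\otimes1$ structure. The only genuine obstacle is tracking these $q$-powers and the $\otimes1$ contributions without error, which is routine given (\ref{rho}) and the two zero-mode commutation relations above.
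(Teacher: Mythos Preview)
Your proposal is correct and follows exactly the route the paper takes: compute the matrix product in (\ref{deltaKmu}) using (\ref{Lpm}), (\ref{K}), and the diagonal $L^{-,0}$, then compare entries with $\bar\delta(K(u))$. The paper's proof is a three-line sketch of precisely this computation; your version simply fills in the entrywise bookkeeping (the $\cal M(u)$-conjugation, the collapse of the triple-product sum via the diagonality of $L^{-,0}$, the zero-mode commutations producing the stated $q$-prefactors, and the assembly of the $\otimes 1$ pieces via (\ref{rho})), and also notes explicitly that the comodule axioms are inherited from Proposition \ref{prop:delta}.
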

\begin{proof}
Compute (\ref{deltaKmu}) using (\ref{Lpm}), (\ref{simil}) and  (\ref{K}) .  Compare the entries of the resulting matrix to $\bar\delta(K(u))$  with (\ref{K}). Applying (\ref{gamD0})-(\ref{gamD3}) to (\ref{eq:cuDre}), (\ref{eq:cuDrf}) and (\ref{kpmz}), one finds $\gamma'_D\left(\bar\delta(K(u))_{ij}\right) \in {U'_q}^{Dr,\triangleright,+}/_{C=1}\otimes U_q^{T,+}\otimes {\mathbb C}[[u^{2}]] $.
\end{proof}

Other examples of left and right  coaction maps can be derived along the same lines. Now, expanding the power series on both sides of the above equations using (\ref{c1}), (\ref{c2}), (\ref{eq:cuDre})-(\ref{eq:cuDrk}) with (\ref{kpmz}), one gets the image by $\bar\delta$ of the generators of  $U_q^{T,+}$.  
\begin{example}\label{excop}
\beqa
\bar\delta(y_1^+) &=& -q(q-q^{-1})\tK^{-1} \tx_0^+ \otimes 1 + \tK^{-1} \otimes y_1^+ \ ,\nonumber\\
\bar\delta(y_0^+) &=& -q^{-1}(q-q^{-1})\tx_1^- \otimes 1 + \tK \otimes y_0^+ \ .\nonumber
\eeqa
\end{example}
Using (\ref{isoCP}) at $C=1$, for (\ref{eq:isol3}) one finds $\bar\delta$ coincides with $\Delta$, see (\ref{coprod}).\vspace{1mm}

Note that tensor product representations for $U_q^{T,+}$ can be obtained from \cite[Section 4]{Bas20}, adapting the definitions of the generators and conventions. See \cite[Proposition 4.5]{Bas20}.

\section{The central extension of $U_q^{T,+}$}\label{secextU}
In this section, following \cite{Ter19b} the central extension of ${U}_q^{T,+}$ and its center are considered. Below, they are denoted respectively ${\cal U}_q^{T,+}$ and ${\textsf C}^+$. Specializing the results of \cite{Bas20}, a Freidel-Maillet type presentation for ${\cal U}_q^{T,+}$ is given. Then, following \cite{Bas20} the alternating subalgebras $U_q(\widehat{gl_2})^{ \triangleright,+}$, ${U'_q}(\widehat{gl_2})^{ \triangleright,+}$ and center ${\textsf C}^\triangleright$ are introduced. By analogy with the analysis in previous section, the Freidel-Maillet type presentation is used to compute the images of the generators of ${\cal U}_q^{T,+}$ and ${\textsf C}^+$ in ${U'_q}(\widehat{gl_2})^{ \triangleright,+}$ and ${\textsf C}^\triangleright$, respectively. \vspace{1mm}

The following definitions are straightforward adaptations of \cite{Ter19b}.
\begin{defn}\label{ext1} ${\cal U}_q^{T,+}$  is the unital associative $\mathbb C(q)$-algebra with {\it equitable} generators $\{Y_{-k}^+,Y_{k+1}^+,Z_{k+1}^+,{\tilde Z}_{k+1}^+|k\in {\mathbb N}\}$ subject to the relations (\ref{def1})-(\ref{def11}) with the substitutions:
\beqa
y_{-k}^+ \rightarrow Y_{-k}^+\ ,\quad y_{k+1}^+ \rightarrow Y_{k+1}^+\ ,\quad z_{k+1}^+ \rightarrow Z_{k+1}^+\ ,\quad {\tilde z}_{k+1}^+ \rightarrow {\tilde Z}_{k+1}^+\ .
\eeqa
\end{defn}
\begin{defn}\label{ext2} The center ${\textsf C}^+$ is the subalgebra of ${\cal U}_q^{T,+}$ generated by the elements:
\beqa
{\textsf C}_{n+1}=(q^2-q^{-2}) \sum_{k=0}^{n} q^{-2n+2k-1} Y_{k+1}^+Y_{-n+k}^+ - \frac{(q-q^{-1})}{\bar\rho}\sum_{k=0}^{n+1}q^{2k-2n-2} Z_k^+\tilde{Z}_{n+1-k}^+ \ ,\quad n \in {\mathbb N}\ \label{Cnp1}
\eeqa
with $Z^+_0=\tilde{Z}^+_0=\bar\rho/(q-q^{-1})$.
\end{defn}

Note that the automorphisms $\sigma,S$, given by (\ref{sig}), (\ref{autS}), naturally extend from $U_q^{T,+}$ to ${\cal U}_q^{T,+}$; See \cite[Section 8]{Ter19b} for details. Importantly, using the defining relations of ${\cal U}_q^{T,+}$ one shows that the central elements ${\textsf C}_{n+1}$ are fixed under the action of $\sigma,S$  \cite[Proposition 8.3]{Ter19b}.\vspace{1mm} 

A Freidel-Maillet type presentation for ${\cal U}_q^{T,+}$ follows from \cite[Theorem 3.1]{Bas20}, adapting the notations. We refer the reader to this work for the proof of the Theorem below. Introduce the generating functions:
\begin{align}
{\tY}_+(u)=\sum_{k\in {\mathbb N}}{Y}^+_{k+1}U^{-k-1} \ , \quad {\tY}_-(u)=\sum_{k\in  {\mathbb N}}{Y}^+_{-k}U^{-k-1} \ ,\label{ct1}\\
 \quad {\tZ}_+(u)=\sum_{k\in {\mathbb N}}{\tilde Z}^+_{k+1}U^{-k-1} \ , \quad {\tZ}_-(u)=\sum_{k\in {\mathbb N}}{Z}^+_{k+1}U^{-k-1} \ .\label{ct2}
\end{align}
\begin{thm}\label{thm3} ${\cal U}_q^{T,+}$ has a presentation of Freidel-Maillet type.
 Let $\tK(u)$ be a square matrix such that 
\beqa
&&  \tK(u)=
       \begin{pmatrix}
      uq \tY_+(u) &  \frac{1}{{\bar k}_-(q+q^{-1})}\tZ_+(u) + \frac{{\bar k}_+(q+q^{-1})}{(q-q^{-1})} \\
     \frac{1}{{\bar k}_+(q+q^{-1})}\tZ_-(u) + \frac{{\bar k}_-(q+q^{-1})}{(q-q^{-1})}  & uq \tY_-(u) 
      \end{pmatrix} \label{tK}
\eeqa
with (\ref{ct1})-(\ref{ct2}).
The defining relations are given by:
\begin{align} R(u/v)\ (\tK(u)\otimes {\mathbb I})\ R^{(0)}\ ({\mathbb I} \otimes \tK(v))\
= \ ({\mathbb I} \otimes \tK(v))\  R^{(0)}\ (\tK(u)\otimes {\mathbb I})\ R(u/v)\ .
\label{REt} \end{align}
\end{thm}

In the Freidel-Maillet framework, a generating function for central elements of ${\textsf C}^+$ is derived from the quantum determinant of the K-operator (\ref{tK}). Introduce the generating function with coefficients (\ref{Cnp1}) 
\beqa
{\textsf C}(u) =  \sum_{k=0}^\infty {\textsf C}_{k+1} U^{-k-1}\ .
\eeqa
By previous comments, note that $\sigma({\textsf C}(u))={\textsf C}(u)$. The following proposition is an alternative to \cite[Section 13]{Ter19b}, adapted to the equitable case. 
\begin{prop} \label{propqdet}  The quantum determinant
\beqa
\Gamma(u)=\tr_{12}\big(P^{-}_{12}(\tK(u)\otimes I\!\!I)\ R^{(0)} (I\!\!I \otimes \tK(uq))\big) \ \ \label{tgamma}
\eeqa
generates  ${\textsf C}^+$. 
\end{prop}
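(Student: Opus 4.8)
The statement as phrased asserts that the coefficients of the quantum determinant $\Gamma(u)$ generate the subalgebra $\textsf{C}^+$ which, by Definition~\ref{ext2}, is generated by the elements $\textsf{C}_{n+1}$ of (\ref{Cnp1}). So the plan is to repeat verbatim the quantum-determinant computation carried out in the proof of Theorem~\ref{thm1}, but now for the central extension ${\cal U}_q^{T,+}$, where the constraint (\ref{condeq}) is \emph{not} imposed. The point is that the very combination whose vanishing produced (\ref{condeq}) in the non-extended case becomes, in ${\cal U}_q^{T,+}$, a genuine (nonzero) generating function whose coefficients are exactly the $\textsf{C}_{n+1}$. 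The centrality of these elements, which justifies the name $\textsf{C}^+$, is imported from \cite[Proposition 3.3, Corollary 3.4]{Bas20} through the dictionary (\ref{cor1})--(\ref{cor2}); for the statement at hand it remains only to identify the trace (\ref{tgamma}) with the generating function $\textsf{C}(u)=\sum_{n\ge0}\textsf{C}_{n+1}U^{-n-1}$.

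First I would insert (\ref{tK}) into the right-hand side of (\ref{tgamma}). This is formally the same matrix computation as the one leading to (\ref{condqdetred}), the only change being that the generating functions $\cY_\pm,\cZ_\pm$ are replaced by $\tY_\pm,\tZ_\pm$ of (\ref{ct1})--(\ref{ct2}). Setting
\[
{\cal C}(u)=(q-q^{-1})u^2q^2\,\tY_+(u)\tY_-(uq)-\frac{(q-q^{-1})}{\bar\rho}\,\tZ_-(u)\tZ_+(uq)-\tZ_-(u)-\tZ_+(uq),
\]
the reduction yields $\Gamma(u)=\frac{1}{2(q-q^{-1})}\bigl({\cal C}(u)+\sigma({\cal C}(u))-\tfrac{2\bar\rho}{q-q^{-1}}\bigr)$. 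Using the exchange relations among the generating functions extracted from the Freidel--Maillet relation (\ref{REt}) — which hold verbatim in ${\cal U}_q^{T,+}$, since they \emph{are} the defining relations — one shows $\sigma({\cal C}(u))={\cal C}(u)$, exactly as in Theorem~\ref{thm1} and in agreement with \cite[Proposition 8.3]{Ter19b} (equivalently the relation $\sigma(\textsf{C}(u))=\textsf{C}(u)$ noted before the proposition). Hence $\Gamma(u)=\frac{1}{q-q^{-1}}{\cal C}(u)-\frac{\bar\rho}{(q-q^{-1})^2}$.

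Next I would expand ${\cal C}(u)$ in powers of $U^{-1}$, with $U=qu^2/(q+q^{-1})$, noting that the shift $u\mapsto uq$ sends $U\mapsto q^2U$. Collecting the coefficient of $U^{-n-1}$ in each of the four terms, and absorbing the two linear contributions $-\tZ_-(u)-\tZ_+(uq)$ as the boundary ($k=0$ and $k=n+1$) terms of the quadratic $Z\tilde Z$ sum via the convention $Z^+_0=\tilde Z^+_0=\bar\rho/(q-q^{-1})$, one recovers precisely the combination (\ref{Cnp1}). This gives the exact identity ${\cal C}(u)=\textsf{C}(u)$, so the coefficient of $U^{-n-1}$ in $\Gamma(u)$ equals $\frac{1}{q-q^{-1}}\textsf{C}_{n+1}$ for $n\ge0$, while the constant term is the scalar $-\bar\rho/(q-q^{-1})^2$. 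As $1/(q-q^{-1})$ is invertible, the coefficients of $\Gamma(u)$ generate the same subalgebra as $\{\textsf{C}_{n+1}\}_{n\in{\mathbb N}}$, namely $\textsf{C}^+$ by Definition~\ref{ext2}, which is the claim.

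The main obstacle is the bookkeeping in the last step: tracking the powers of $q$ generated by the shift $U\mapsto q^2U$ and by (\ref{ct1})--(\ref{ct2}), and correctly folding the linear terms $-\tZ_-(u)-\tZ_+(uq)$ into the boundary terms of the quadratic sum through the prescribed values of $Z^+_0,\tilde Z^+_0$ (a routine but error-prone comparison with (\ref{Cnp1})). A secondary point requiring care is that the $\sigma$-invariance of ${\cal C}(u)$ must be derived purely from (\ref{REt}), since the constraint (\ref{condeq}) used in the proof of Theorem~\ref{thm1} is deliberately unavailable in the central extension.
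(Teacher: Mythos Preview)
Your proposal is correct and follows essentially the same approach as the paper. The paper's proof is more terse: it cites \cite[Proposition 3.3]{Bas20} for centrality and then directly states the reduced formula $\Gamma(u)=\frac{1}{q-q^{-1}}\bigl(\textsf{C}(u)-\frac{\bar\rho}{q-q^{-1}}\bigr)$ with the closed expression (\ref{tZvee}) for $\textsf{C}(u)$, leaving implicit both the $\sigma$-symmetrization step you borrow from the proof of Theorem~\ref{thm1} and the coefficient-matching against (\ref{Cnp1}) that you spell out.
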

\begin{proof} Firstly, one shows that $\Gamma(u)$ is central i.e. $\big[\Gamma(u),(\tK(u))_{ij}\big]=0$. We refer the reader to \cite[Proposition 3.3]{Bas20} for details. Secondly, inserting (\ref{tK}) into the r.h.s. of (\ref{tgamma}), one gets
\beqa
\Gamma(u)= \frac{1}{(q-q^{-1})}\left( {\textsf C}(u)  -\frac{\bar\rho}{(q-q^{-1})}\right)\ \label{GC}
\eeqa
where
\beqa
{\textsf C}(u) &=& (q-q^{-1})u^2q^2\tY_+(u)\tY_-(uq)  - \frac{(q-q^{-1})}{\bar\rho} \tZ_-(u)\tZ_+(uq)  - \tZ_-(u) - \tZ_+(uq)\ .\label{tZvee}
\eeqa
\end{proof}

Also, the analogs of \cite[Lemma 3.3, Lemma 2.8]{Ter19b}  take the following form. Recall Theorems \ref{thm1}, \ref{thm3}.
\begin{lem}
There exists a surjective homomorphism ${\cal U}_q^{T,+} \rightarrow U_q^{T,+}$ that sends
\beqa
\tK(u) \mapsto K(u) \ ,\qquad \Gamma(u) \mapsto -\frac{\bar\rho}{(q-q^{-1})^2}\ .
\eeqa
\end{lem}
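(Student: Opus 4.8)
The plan is to realize the map as the obvious assignment of generators and then identify the image of the quantum determinant with the defining constraint of $U_q^{T,+}$. Concretely, I would send the equitable generators of ${\cal U}_q^{T,+}$ to the like-named equitable generators of $U_q^{T,+}$, i.e.
$Y^+_{-k}\mapsto y^+_{-k}$, $Y^+_{k+1}\mapsto y^+_{k+1}$, $Z^+_{k+1}\mapsto z^+_{k+1}$, $\tilde Z^+_{k+1}\mapsto \tilde z^+_{k+1}$ for all $k\in\mathbb N$. Comparing the generating functions (\ref{ct1})--(\ref{ct2}) with (\ref{c1})--(\ref{c2}), and hence the K-operators (\ref{tK}) with (\ref{K}), this single assignment is precisely the statement $\tK(u)\mapsto K(u)$, so the two descriptions of the map agree.

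For well-definedness I would invoke the presentations directly. By Definition~\ref{ext1}, ${\cal U}_q^{T,+}$ is presented by the relations (\ref{def1})--(\ref{def11}) alone (with the capital-letter substitutions), with \emph{no} further constraint. Since the images $y^+_{-k},y^+_{k+1},z^+_{k+1},\tilde z^+_{k+1}$ satisfy exactly these relations inside $U_q^{T,+}$ by Theorem~\ref{thm:Tp}, the assignment extends uniquely to an algebra homomorphism ${\cal U}_q^{T,+}\to U_q^{T,+}$. Surjectivity is then immediate, because the images are by construction the generators of $U_q^{T,+}$.

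It remains to compute the image of $\Gamma(u)$. Here I would use that $\Gamma(u)$ and the quantum determinant appearing in Theorem~\ref{thm1} are literally the same trace expression built from the respective K-operators. Since $\tK(u)\mapsto K(u)$, the homomorphism sends $\Gamma(u)=\tr_{12}\big(P^{-}_{12}(\tK(u)\otimes I\!\!I)\,R^{(0)}\,(I\!\!I\otimes\tK(uq))\big)$ to $\tr_{12}\big(P^{-}_{12}(K(u)\otimes \mathbb I)\,R^{(0)}\,(\mathbb I\otimes K(uq))\big)$, which equals $-\bar\rho/(q-q^{-1})^2$ by (\ref{condqdet}), the defining quantum-determinant condition of $U_q^{T,+}$. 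Equivalently, one may argue through (\ref{GC}): since $\Gamma(u)=\frac{1}{q-q^{-1}}\big({\textsf C}(u)-\frac{\bar\rho}{q-q^{-1}}\big)$ with ${\textsf C}(u)$ as in (\ref{tZvee}), and the map carries ${\textsf C}(u)$ to the series ${\cal C}(u)$ of (\ref{Zvee}), the additional relation (\ref{condeq}) that defines $U_q^{T,+}$ is exactly the vanishing ${\cal C}(u)=0$ established in the second part of the proof of Theorem~\ref{thm1}, which yields the stated constant.

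The only genuine content is the bookkeeping check that imposing (\ref{condeq}) on $U_q^{T,+}$ coincides, under these identifications, with the vanishing of the image of ${\textsf C}(u)$, so that the extra relation does not over-constrain the generators and the homomorphism stays well-defined. This is guaranteed by Proposition~\ref{propqdet} together with the reduction (\ref{condqdetred})--(\ref{Zveecond}) already carried out for Theorem~\ref{thm1}; no new computation is needed, so I expect the only obstacle to be matching the conventions between the capital- and lowercase-generator presentations.
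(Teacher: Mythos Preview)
Your proposal is correct and follows the natural route the paper points to (it states the lemma as the analog of \cite[Lemmas 3.3, 2.8]{Ter19b} and only says ``Recall Theorems~\ref{thm1}, \ref{thm2}'' in lieu of a proof). One small expository point: your final paragraph slightly overcomplicates the well-definedness check. Since the defining relations of ${\cal U}_q^{T,+}$ are exactly (\ref{def1})--(\ref{def11}) and these hold in $U_q^{T,+}$, the homomorphism is well-defined; the \emph{extra} relation (\ref{condeq}) lives in the target, not the source, so it plays no role in well-definedness---it only shows the map has nontrivial kernel (hence is not injective), which is consistent with ${\cal U}_q^{T,+}$ being a central extension.
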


An embedding of ${\cal U}_q^{T,+}$ into a subalgebra of $U_q(\widehat{gl_2})$  can be obtained using the the FRT presentation  of Theorem \ref{def:UqRS}.  To prepare the analysis below, some results from \cite{Bas20} are needed. We refer to \cite[Section 5]{Bas20} for details and references. The definition below is a variation of \cite[Definition 5.12, eqs. (5.52)-(5.53)]{Bas20}. 
\begin{defn}
\label{defaltq}
\beqa
U_q(\widehat{gl_2})^{\triangleright,\pm} &=& \{C^{\mp k/2}\tK^{-1}\tx_{k}^\pm, C^{\pm (k+1)/2}\tx^\mp_{k+1}, a_{1,k+1},a_{2,k+1}|k\in {\mathbb N}\} \ ,\nonumber\\
U_q(\widehat{gl_2})^{\triangleleft,\pm}&=& \{C^{\mp k/2}\tx_{-k}^\pm, C^{\pm (k+1)/2} \tx^\mp_{-k-1}\tK, a_{1,-k-1},a_{2,-k-1}| k\in {\mathbb N}\} \ . \nonumber
\eeqa
We call $U_q(\widehat{gl_2})^{\triangleright,\pm}$ and $U_q(\widehat{gl_2})^{ \triangleleft,\pm}$ the right and left 
alternating subalgebras of  $U_q(\widehat{gl_2})$.  The subalgebra generated by $\{\tK^{\pm 1},C^{\pm 1/2}\}$ is denoted  $U_q(\widehat{gl_2})^{\diamond}$.
\end{defn}

Importantly, it is known that the elements 
\beqa
\gamma_m = q^{m}a_{1,m} + q^{-m}a_{2,m} \quad \mbox{for}\quad m \in {\mathbb Z}^*\ \label{decg}
\eeqa
generate the center ${\cal C}$ of $U_q(\widehat{gl_2})$ (see e.g. \cite{FMu}). It follows:
\begin{defn}\label{Ct} The center $\cal C^\triangleright$ (resp. $\cal C^\triangleleft$) of  $U_q(\widehat{gl_2})^{\triangleright,\pm}$ (resp.  $U_q(\widehat{gl_2})^{ \triangleleft,\pm}$) is generated by  
 $\gamma_m$ (resp. 
 $\gamma_{-m}$) with $m\in {\mathbb N}^*$.
\end{defn}

Extensions of the alternating subalgebras of $U_q(\widehat{gl_2})$ are now introduced.
\begin{defn} ${U'_q}(\widehat{gl_2})^{\triangleright,\pm}$ (resp. ${U'_q}(\widehat{gl_2})^{\triangleleft,\pm}$) denote the subalgebras of $U_q(\widehat{gl_2})$ generated by ${U}_q(\widehat{gl_2})^{\triangleright,\pm}$ (resp. ${U}_q(\widehat{gl_2})^{\triangleleft,\pm}$) and $\{\tK^{\pm 1},C^{\pm 1/2}\}$.
\end{defn}

For the quantum algebra $U_q(\widehat{gl_2})$, it is known that $U_q(\widehat{gl_2}) \cong U_q^{Dr}\otimes {\cal C}$. Thus, for the alternating subalgebras analog properties hold. Recall Definition \ref{defaltsl2qp} and (\ref{a2mbis}). One has:
\beqa
{U'_q}(\widehat{gl_2})^{\triangleright,\pm} \cong {U'_q}^{Dr,\triangleright,\pm}  \otimes \cal C^\triangleright\ ,\qquad {U'_q}(\widehat{gl_2})^{\triangleleft,\pm} \cong {U'_q}^{Dr,\triangleleft,\pm} \otimes \cal C^\triangleleft \ . \nonumber
\eeqa

The embedding of the Freidel-Maillet algebra (\ref{REt}) into the FRT presentation of $U_q(\widehat{gl_2})$ is now studied. Generalizing the results of previous section, the map (\ref{Kmu}) allows to establish the precise relation between the equitable generators $ \{{Y}^+_{-k}, {Y}^+_{k+1},  {Z}^+_{k+1}, {\tilde Z}^+_{k+1}\}$ and the generators of alternating subalgebras of $U_q(\widehat{gl_2})$.
Recall (\ref{cu}) and Proposition \ref{map1}. Introduce the generating function in the central elements: 
\beqa
{\textsf c}(u) = \exp\left( -(q-q^{-1}) \sum_{n=1}^\infty \frac{\gamma_n}{q^n+q^{-n}} (qu^2\lambda)^{- n}\right)\  \in {\cal C}^\triangleright \otimes {\mathbb C}[[u^2]].\label{tcu}
\eeqa
\begin{prop}\label{prop:YZmap} There exists an injective homomorphism $\mu: \ {\cal U}_q^{T,+} \rightarrow {U'_q}(\widehat{gl_2})^{\triangleright,+}$ such that
\beqa
\tY_\pm(u) &\mapsto&  \nu(\cY_\pm(u)) c(u)^{-1} {\textsf c}(u)\ ,\label{tYmap}\\
\tZ_\pm(u) &\mapsto&  \left(\nu(\cZ_\pm(u)) + \frac{\bar\rho}{(q-q^{-1})}\right)c(u)^{-1}{\textsf c}(u) - \frac{\bar\rho}{(q-q^{-1})}\ .\label{tZmap}
\eeqa
\end{prop}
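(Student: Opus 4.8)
The plan is to obtain $\mu$ from the map $\nu$ of \propref{map1} together with the Freidel--Maillet presentation of ${\cal U}_q^{T,+}$ (\thmref{thm2}). First I would recast \er{tYmap}--\er{tZmap} as the single matrix identity
\begin{align}
\mu(\tK(u)) = \nu(K(u))\,c(u)^{-1}{\textsf c}(u),\label{muK}
\end{align}
where $\nu(K(u))$ denotes the entrywise image of \er{K}. The diagonal entries are immediate from \er{tK} and \er{K}; for the off-diagonal entries one uses the normalization \er{rho}, namely $\bar\rho={\bar k}_+{\bar k}_-(q+q^{-1})^2$, to absorb the constant shifts $\pm\bar\rho/(q-q^{-1})$ appearing in \er{tZmap} into the central factor. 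Here $f(u):=c(u)^{-1}{\textsf c}(u)$ is a \emph{central} generating function in ${U'_q}(\widehat{gl_2})^{\triangleright,+}$: by \er{cu} the series $c(u)$ is built from the $\gamma'_n$, which are Laurent polynomials in the central element $C^{1/2}$, while by \er{tcu} the series ${\textsf c}(u)$ is built from the $\gamma_n$, which are central by Definition~\ref{Ct}; both have constant term $1$, so $f(u)$ is invertible.

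Granting \er{muK}, the homomorphism property is clean. Since $\nu$ is an algebra homomorphism and $K(u)$ satisfies \er{RE}, so does $\nu(K(u))$. As the factors $R(u/v)$ and $R^{(0)}$ in \er{RE} are scalar and $f(u)$ is central, the substitution $\nu(K(u))\mapsto\nu(K(u))f(u)$ merely multiplies both sides of \er{RE} by the common central factor $f(u)f(v)$; hence $\mu(\tK(u))$ satisfies \er{REt}. Because \er{REt} is the complete set of defining relations of ${\cal U}_q^{T,+}$ (no quantum-determinant condition being imposed), $\mu$ extends to an algebra homomorphism, and its image lies in ${U'_q}(\widehat{gl_2})^{\triangleright,+}$ since the entries of $\nu(K(u))$ lie in ${U'_q}^{Dr,\triangleright,+}$ while $f(u)\in{\cal C}^\triangleright\otimes{\mathbb C}[[u^2]]$, using ${U'_q}(\widehat{gl_2})^{\triangleright,+}\cong{U'_q}^{Dr,\triangleright,+}\otimes{\cal C}^\triangleright$.

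The hard part will be injectivity. I would use the central decompositions ${\cal U}_q^{T,+}\cong U_q^{T,+}\otimes{\textsf C}^+$, with ${\textsf C}^+$ a polynomial algebra over which ${\cal U}_q^{T,+}$ is free (adapting \cite{Ter19b}), and ${U'_q}(\widehat{gl_2})^{\triangleright,+}\cong{U'_q}^{Dr,\triangleright,+}\otimes{\cal C}^\triangleright$. Composing $\mu$ with the specialization $\gamma_n\mapsto\gamma'_n$, which sends $f(u)\mapsto 1$, recovers $\nu\circ\pi$ for the canonical surjection $\pi:{\cal U}_q^{T,+}\twoheadrightarrow U_q^{T,+}$; since $\nu$ is injective this forces $\ker\mu$ into the augmentation ideal of the central factor ${\textsf C}^+$. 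It then remains to see that $\mu$ is injective on ${\textsf C}^+$, for which I would compute $\mu({\textsf C}(u))$ from \er{GC}, \er{tZvee} by rerunning the factorization behind \er{nuC} while retaining the genuine $\gamma_n$; this yields a factorized image of the form $\frac{\bar\rho}{q-q^{-1}}+\big((q-q^{-1})q^{-2}\bar\epsilon_+\bar\epsilon_-(qu^2)^{-1}-\frac{\bar\rho}{q-q^{-1}}\big){\textsf c}(u){\textsf c}(uq)$ for the parameters \er{parfixed} (this is the content of \corref{cor:Cmap}), from which the $\gamma_n$, and hence all of ${\cal C}^\triangleright$, are recovered. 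The main obstacle I anticipate is that the central factor $f(u)$ couples the two decompositions, so $\mu$ is not literally a tensor product of two injective maps; the cleanest remedy is a leading-term argument in the $\mathbb N^2$-grading. Extending the computations behind \er{im1}--\er{im4} and Example~\ref{exi1}, one checks that $\mu$ sends the PBW generators $Y_{-k}^+$, $Z_{n+1}^+$, $Y_{\ell+1}^+$ and the central ${\textsf C}_{n+1}$ to elements whose leading terms are, respectively, the Drinfeld PBW generators $C^{(k+1)/2}\tx_{k+1}^-$, $\tho_{n+1}$, $C^{-\ell/2}\tK^{-1}\tx_\ell^+$ and $\gamma_{n+1}$ of the target in a triangular fashion; linear independence of these leading terms then forces $\mu$ to be injective.
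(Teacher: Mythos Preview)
Your argument is correct and reaches the same matrix identity as the paper, namely $\mu(\tK(u))=\nu(K(u))\,c(u)^{-1}{\textsf c}(u)$, but the route is genuinely different. The paper does not \emph{define} $\mu$ by this formula and then verify \er{REt}; instead it returns to the L-operator realization \er{Ktmz}, expands $\tilde K^-(z)$ in ${U'_q}(\widehat{gl_2})^{\triangleright,+}$, and observes that the dependence on the central elements $\gamma_n$ factors out uniformly: using \er{a2mbis} and the map $\gamma'_D$ of \er{gamD0}--\er{gamD3} one gets the factorization $\cal M(u)\tilde K^-(qu^2)\cal M(u)={\textsf c}(u)c(u)^{-1}\cdot\cal M(u)\gamma'_D(\tilde K^-(qu^2))\cal M(u)$, the second factor being precisely $\nu(K(u))$ via \er{Kmu}. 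This is \er{tKgl}, and comparing entries with \er{tK} yields \er{tYmap}--\er{tZmap}. Thus the paper's identity is \emph{derived} from the Ding--Frenkel Gauss decomposition rather than postulated, and injectivity comes for free from the injective embedding \er{Kmu} of the Freidel--Maillet algebra into the Yang--Baxter algebra (stated just above \er{Kmu} and imported from \cite{Bas20}), specialised here to the version \er{REt} without quantum-determinant constraint.

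What your approach buys is independence from the L-operator machinery: once one knows $\nu$ exists and $f(u)=c(u)^{-1}{\textsf c}(u)$ is central, the homomorphism property is a one-line symmetry argument. The price is that you must supply injectivity by hand, and you correctly identify the subtlety: $\mu$ is not a tensor product of maps under the decompositions ${\cal U}_q^{T,+}\cong U_q^{T,+}\otimes{\textsf C}^+$ and ${U'_q}(\widehat{gl_2})^{\triangleright,+}\cong{U'_q}^{Dr,\triangleright,+}\otimes{\cal C}^\triangleright$, since $f(u)$ mixes the factors. Your proposed triangular leading-term argument against the PBW bases of \propref{pbw} and \propref{pbwalt} is workable, but note that the intermediate step ``$\ker\mu$ is contained in the augmentation ideal of ${\textsf C}^+$'' is not sufficient on its own (the kernel could a priori contain $y\otimes c$ with $y\neq 0$); you really do need the full PBW comparison, which you outline but do not carry out. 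The paper sidesteps all of this by appealing to the injectivity of \er{Kmu}.
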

\begin{proof}
One expands explicitly (\ref{Ktmz}) using (\ref{Lpm}). For the entry $({\tilde K}^-(z))_{11}$, we previously obtained (\ref{K11mz}). Inserting  (\ref{a2mbis}) and using (\ref{gamD2}), it factorizes as:
\beqa
(\tilde K^-(z))_{11} &=& \exp\left( -(q-q^{-1}) \sum_{n=1}^\infty \frac{(\gamma_{n}-\gamma'_{n})}{q^n+q^{-n}} (z\lambda)^{- n}\right)   \gamma_D'((\tilde K^-(z))_{11}) 
\ .
\eeqa
Actually, other entries $(\tilde K^-(z))_{ij} \in {U'_q}(\widehat{gl_2})^{\triangleright,\pm} \otimes {\mathbb C}[[z]]$ and similarly factorize in terms of  $\gamma_D'((\tilde K^-(z))_{ij})$. Thus, a solution of (\ref{REt}) is given by the  K-operator:
\beqa
\cal M(u) \left({\tilde K}^-(qu^2)\right) {\cal M}(u)  =  {\textsf c}(u) c(u)^{-1}\cal M(u) \gamma'_D\left({\tilde K}^-(qu^2)\right)   {\cal M}(u) \label{tKgl}
\eeqa
with (\ref{cu}), (\ref{tcu}). Then,  one compares (\ref{K}) to (\ref{tKgl}) which gives  (\ref{tYmap})-(\ref{tZmap}).
\end{proof}

In terms of central elements of $U_q(\widehat{gl_2})$, the quantum determinant takes a rather simple form.
\begin{cor}\label{cor:Cmap}
\beqa
\Gamma(u) \stackrel{\mu}{\mapsto}  \left(  \frac{\bar\epsilon_+\bar\epsilon_-(q-q^{-1})^2 - \bar k_+\bar k_-(q+q^{-1})^2}{q^3u^2(q-q^{-1})^2}\right)\exp\left( -(q-q^{-1}) \sum_{n=1}^\infty \gamma_n (q^2u^2\lambda)^{- n}\right)\ \label{Cmap}
\eeqa
with (\ref{parfixed}).
\end{cor}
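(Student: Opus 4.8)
The plan is to feed the factorized maps of Proposition \ref{prop:YZmap} into the quantum determinant and to recycle the computation already carried out for $\nu$ in the proof of Proposition \ref{map1}. First I would start from Proposition \ref{propqdet}, which through (\ref{GC}) expresses $\Gamma(u)$ via the generating function ${\textsf C}(u)$ of (\ref{tZvee}), so that it suffices to compute $\mu({\textsf C}(u))$. Since ${\textsf C}(u)$ in (\ref{tZvee}) has exactly the same shape as ${\cal C}(u)$ in (\ref{Zvee}) but with $\tY_\pm,\tZ_\pm$ in place of $\cY_\pm,\cZ_\pm$, I would substitute (\ref{tYmap})--(\ref{tZmap}). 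Writing $\Phi(u)=c(u)^{-1}{\textsf c}(u)$ and $R=\bar\rho/(q-q^{-1})$, the central factor $\Phi(u)$ commutes with everything by (\ref{cu}), (\ref{tcu}); thus every quadratic monomial of ${\textsf C}(u)$ acquires a factor $\Phi(u)\Phi(uq)$, while the additive constants $R$ hidden in (\ref{tZmap}) produce the extra linear and constant pieces.

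The key step is a cancellation. Expanding $\mu(\tZ_-(u))\mu(\tZ_+(uq))$ and collecting terms, the coefficients of $\Phi(u)$ alone and of $\Phi(uq)$ alone vanish because $\tfrac{(q-q^{-1})}{\bar\rho}R=1$, and the leftover constants combine, using $\tfrac{(q-q^{-1})}{\bar\rho}R^{2}=R$, into a single $R$. After invoking that $\nu$ is a homomorphism to rewrite the $\cY_+\cY_-$ contribution through ${\cal C}(u)$, I expect the whole $\Phi(u)\Phi(uq)$ coefficient to collapse exactly to $\nu({\cal C}(u))-R$, so that
\begin{equation*}
\mu({\textsf C}(u))=\big(\nu({\cal C}(u))-R\big)\,\frac{{\textsf c}(u)\,{\textsf c}(uq)}{c(u)\,c(uq)}+R .
\end{equation*}
Feeding this into (\ref{GC}) the two constants $R$ drop out, giving $\mu(\Gamma(u))=\tfrac{1}{q-q^{-1}}\big(\nu({\cal C}(u))-R\big)\,{\textsf c}(u){\textsf c}(uq)/\big(c(u)c(uq)\big)$.

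At this point I would insert the value of $\nu({\cal C}(u))-R$ already established in (\ref{nuC}): it equals an explicit scalar times $c(u)c(uq)$, so the factor $c(u)c(uq)$ cancels and only the central series ${\textsf c}(u){\textsf c}(uq)$ survives together with that scalar. A short computation with $U=qu^{2}/(q+q^{-1})$ yields ${\textsf c}(u){\textsf c}(uq)=\exp\!\big(-(q-q^{-1})\sum_{n\ge1}\gamma_n(q^{2}u^{2}\lambda)^{-n}\big)$, and replacing $\bar\rho$ by $\bar k_+\bar k_-(q+q^{-1})^{2}$ from (\ref{rho}) together with the specializations (\ref{parfixed}) recasts the scalar into the quotient displayed in (\ref{Cmap}).

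The main obstacle I anticipate is purely bookkeeping: verifying that all terms linear in a single $\Phi(u)$ or $\Phi(uq)$ cancel and that the $\cY$--$\cZ$ reshuffling really reproduces $\nu({\cal C}(u))$ rather than some $\sigma$-twisted variant. A clean way to organize and independently check this is to note that the same answer must arise from the multiplicativity of the Sklyanin quantum determinant over the dressing $\tilde K^-(z)=L^-(z\lambda)\tilde K^0(z)L^{-,0}$ of Lemma \ref{lem:ktmz}: the FRT quantum determinant of $L^-$ furnishes the central exponential in the $\gamma_n$, the diagonal $L^{-,0}$ contributes $1$ after applying $\gamma'_D$ by (\ref{gamD3}), and the scalar quantum determinant of $\tilde K^0(z)$ at the shifted argument supplies exactly the prefactor. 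Checking that these two routes agree is the surest guard against sign and spectral-shift errors.
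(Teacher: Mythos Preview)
Your argument is correct and essentially mirrors the paper's proof: applying $\mu$ to (\ref{tZvee}) via (\ref{tYmap})--(\ref{tZmap}) yields exactly the factorization $\mu({\textsf C}(u))-R=\big(\nu({\cal C}(u))-R\big)\,{\textsf c}(u){\textsf c}(uq)\,c(u)^{-1}c(uq)^{-1}$, after which (\ref{nuC}), (\ref{GC}), (\ref{rho}) and the product formula for ${\textsf c}(u){\textsf c}(uq)$ give (\ref{Cmap}). Your suggested cross-check via multiplicativity of the quantum determinant under the dressing of Lemma~\ref{lem:ktmz} is not used in the paper but is a sound independent verification.
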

\begin{proof}
Applying $\mu$ to (\ref{tZvee}) and using (\ref{tYmap}), (\ref{tZmap}), one gets:
\beqa
\mu({\textsf C}(u)) -\frac{\bar\rho}{(q-q^{-1})} = {\textsf c}(u){\textsf c}(uq)c(u)^{-1}c(uq)^{-1}\left(\nu({\cal C}(u)) -\frac{\bar\rho}{(q-q^{-1})}\right)\ .
\eeqa 
Using (\ref{rho}), (\ref{nuC}), (\ref{GC}), (\ref{tcu}), the r.h.s. of (\ref{Cmap}) follows.
\end{proof}

\begin{rem} An alternative expression for  $\mu(\Gamma(u))$ is derived as follows. Inserting (\ref{decg}) into (\ref{tcu}) and using the second eq. of (\ref{a2mbis}), (\ref{hh}) and (\ref{psi}), one gets:
\beqa
\Gamma(u) \stackrel{\mu}{\mapsto}   \left(  \frac{\bar\epsilon_+\bar\epsilon_-(q-q^{-1})^2 - \bar k_+\bar k_-(q+q^{-1})^2}{q^3u^2(q-q^{-1})^2}\right) {\textsf g}(u) \psi(q^2u^2\lambda){\textsf g}(uq)\ \label{Cmapbis}
\eeqa
with 
\beqa
 {\textsf g}(u) = \exp\left( -(q-q^{-1}) \sum_{n=1}^\infty a_{1,n} (qu^2\lambda)^{-n}\right) \ .\label{Cmapbisalt}
\eeqa
\end{rem}
\vspace{1mm}

\section{Concluding remarks}
The results of this paper together with \cite{Bas20} can be summarized as follows. In \cite{Bas20}, it was shown that the alternating presentation for an algebra $U_q^+$ and its central extension ${\cal U}_q^+$ introduced and studied in \cite{Ter18,Ter19,Ter19b} 
admits a presentation of Freidel-Maillet type. For ${\cal U}_q^+$, this presentation consists of a K-operator satisfying (\ref{REt}), which entries are generating functions in the alternating generators of ${\cal U}_q^+$ \cite{Ter19,Ter19b}; See \cite[Theorem 3.1]{Bas20}.
To get the analog presentation for $U_q^+$, a condition for the quantum determinant of the K-operator is asserted. It reads (\ref{condqdet}). Now, by Lemmas \ref{lem:1}, \ref{lem:2}, two different embeddings of the alternating presentation of $U_q^+$ into 
 $U_q(\widehat{sl_2})$ can be considered
: \vspace{1mm}

(i) The Drinfeld-Jimbo (or Chevalley) type:  the alternating presentation for $U_q^{DJ,+}$ - the standard positive part of $U_q(\widehat{sl_2})$; \vspace{1mm}

(ii) The equitable (or Ito-Terwilliger) type: the alternating presentation for $U_q^{IT,+}$ - the non-standard positive part of $U_q(\widehat{sl_2})$.  \vspace{1mm}

In this paper, the alternating presentation for the equitable type is denoted $U_q^{T,+}$, and its central extension ${\cal U}_q^{T,+}$. Its generators are called the equitable generators to avoid any confusion with the Drinfeld-Jimbo type. \vspace{1mm}

The alternating presentations (i),(ii), are studied in details in \cite{Bas20,Ter21b} and the present paper. The approach followed  in \cite{Bas20} and here is based on Freidel-Maillet type presentations. In this framework, to the K-operator of $U_q^+$ (see \cite[eq. (3.8)]{Bas20} one associates a K-operator of Drinfeld-Jimbo type for (i), or of equitable type (\ref{K}) for (ii). For each type, embeddings into Yang-Baxter subalgebras of $U_q^{RS}$ \cite{RS,DF93} and Drinfeld second realization $U_q^{Dr}$ \cite{Dr} are studied in details. They are characterized explicitly using L-operators and Drinfeld generators as follows: \vspace{1mm}

(i') The K-operator of Drinfeld-Jimbo type is the image of (\ref{Kmu}) by $\gamma'_D$  for $\bar\epsilon_\pm=0,\bar k_+=q^2,\bar k_-=-q^{-1}$ and $\gamma'_n=0$, $\forall n$. As a corollary, the image of the alternating generators in $U_q^{Dr}$ is obtained in terms of Drinfeld generators/root vectors from (\ref{im1})-(\ref{im4}); See \cite[Propositions 5.27]{Bas20};\vspace{1mm}

(ii') The K-operator of equitable type is the image of (\ref{Kmu}) by $\gamma'_D$ for (\ref{parfixed}), (\ref{gamp}). As a corollary,  the image of the equitable generators  in terms of Drinfeld generators is obtained. It follows from (\ref{im1})-(\ref{im4}), thus generalizing (\ref{eq:isol3}) of \cite{IT03}. See Remark \ref{rem:root} for the corresponding expressions in terms of root vectors.\vspace{1mm}

Analogous results hold for their central extensions related with subalgebras of $U_q(\widehat{gl_2})$, see \cite[Section 5.2]{Bas20}) and Section \ref{secextU} of this paper. Furthermore, for the central extension of any type (i) and (ii) the image of the quantum determinant (\ref{Cmap}) coincides, up to an overall factor, with a generating function for `half' of central elements (\ref{decg}) of $U_q(\widehat{gl_2})$.\vspace{1mm}

The results of \cite{Bas20} and the present paper show that Freidel-Maillet type algebras provide a unified framework for $U_q^{DJ,+}$ and $U_q^{IT,+}$.  Actually, this unified framework can be extended to $U_q(\widehat{sl_2})$ (see \cite[Section 6]{Bas20} for the Drinfeld-Jimbo type), thus providing an alternative to the FRT presentation for $U_q(\widehat{sl_2})$ \cite{DF93}. For the Drinfeld-Jimbo type, the existence of a Freidel-Maillet type presentation can be understood from the FRT presentation using a Drinfeld twist. The most interesting case is the Freidel-Maillet presentation for $U_q(\widehat{sl_2})$ of equitable type. Details will be considered elsewhere. 
As a preliminary, for the case of $U_q(sl_2)$ the Freidel-Maillet type presentation unifying the Drinfeld-Jimbo and equitable presentations is introduced and studied in \cite{Bas21}. \vspace{1mm}

Let us also mention that FRT presentations for higher rank affine Lie algebras have been recently achieved, see \cite{JLM,JLMBD,LP21}. By analogy, Freidel-Maillet and alternating presentations of Drinfeld-Jimbo or equitable type for higher rank cases are expected.\vspace{1mm}   

From the perspective of physics, it seems natural to study further Freidel-Maillet type algebras of Drinfeld-Jimbo or equitable type. Indeed, it is known that FRT presentations of quantum algebras provide a powerful framework for the explicit construction and analysis of quantum integrable models such as spin chains, using the Bethe ansatz or q-vertex operators's techniques for instance. By analogy, it would be natural to investigate the class of quantum integrable models generated from Freidel-Maillet type presentations. 
\vspace{1cm}

\noindent{\bf Acknowledgments:} I thank Paul Terwilliger for many discussions, kind explanations of his work and important comments on the manuscript. Also, I thank him for sharing the unpublished results of \cite{Ter21b} which motivated the analysis of Section \ref{secextU}. P.B.  is supported by C.N.R.S. 
\vspace{0.2cm}

\begin{appendix}

\section{Drinfeld-Jimbo  and Drinfeld (second realization) presentation of $U_q(\widehat{sl_2})$}\label{ap:A}
\vspace{2mm}
For the quantum affine Kac-Moody algebra $U_q(\widehat{sl_2})$, two standard presentations are recalled. The {\it Drinfeld-Jimbo} presentation  $U_q^{DJ}$ and the  {\it Drinfeld (second) presentation}  $U_q^{Dr}$, see e.g. \cite[p.392]{CPb}. 
\subsection{Drinfeld-Jimbo presentation $U_q^{DJ}$}
Define the extended Cartan matrix $\{a_{ij}\}$ ($a_{ii}=2$,\ $a_{ij}=-2$ for $i\neq j$). The quantum affine algebra $U_{q}(\widehat{sl_2})$ over ${\mathbb C}(q)$ is generated by  $\{E_j,F_j,K_j^{\pm 1}\}$, $j\in \{0,1\}$ which satisfy the defining relations
\beqa 
K_iK_j=K_jK_i\ , \quad K_iK_i^{-1}=K_i^{-1}K_i=1\ , \quad
K_iE_jK_i^{-1}= q^{a_{ij}}E_j\ ,\quad
K_iF_jK_i^{-1}= q^{-a_{ij}}F_j\ ,\quad
[E_i,F_j]=\delta_{ij}\frac{K_i-K_i^{-1}}{q-q^{-1}}\
\nonumber\eeqa
together with the $q-$Serre relations  ($i\neq j$)
\beqa
 \big[E_i, \big[E_i, \big[E_i,E_j \big]_{q} \big]_{q^{-1}} \big]&=&0\ ,\label{defUqDJp}\\
 \big[F_i, \big[F_i, \big[F_i,F_j \big]_{q} \big]_{q^{-1}} \big]&=& 0\ . \label{defUqDJm}
\eeqa
The  product $C=K_0K_1$ is the central element of the algebra. 

The Hopf algebra structure is ensured by the existence of a
comultiplication $\Delta$
, antipode ${\cal S}$
and a counit ${\cal E}$
with
\beqa \Delta(E_i)&=& 1 \otimes E_i + E_i \otimes K_i
\ ,\label{coprod} \\
 \Delta(F_i)&=&F_i \otimes 1 +   K_i^{-1}\otimes F_i\ ,\nonumber\\
 \Delta(K_i)&=&K_i\otimes K_i\ ,\nonumber
\eeqa
%
%
%
\beqa {\cal S}(E_i)=-E_iK_i^{-1}\ ,\quad {\cal S}(F_i)=-K_iF_i\ ,\quad {\cal S}(K_i)=K_i^{-1} \qquad {\cal S}({1})=1\
\label{antipode}\nonumber\eeqa
and\vspace{-0.3cm}
\beqa {\cal E}(E_i)={\cal E}(F_i)=0\ ,\quad {\cal
E}(K_i)=1\ ,\qquad {\cal E}(1)=1\
.\label{counit}\nonumber\eeqa
%
%
%

%
\subsection{Drinfeld's second realization $U_q^{Dr}$}
A second presentation for the quantum affine algebra $U_q(\widehat{sl_2})$, known as the {\it Drinfeld's second realization}, is now recalled.  In \cite{Dr}, it is shown that    $U_q(\widehat{sl_2})$ is isomorphic to the associative algebra over ${\mathbb C}(q)$ with generators $\{{\tx}_k^{\pm}, \tho_{\ell},
\tK^{\pm 1} |k\in {\mathbb Z},\ell\in {\mathbb Z}\backslash
\{0\} \}$, central elements $C^{\pm 1/2}$ and the following relations (see e.g. \cite[Theorem 12.2.1]{CPb}):  
\beqa
&&  C^{1/2}C^{-1/2}=1\ ,\quad \tK\tK^{-1}=\tK^{-1}\tK=1 \ ,\label{gl1}\\
&&\big[ \tho_k,\tho_\ell   \big]  =  \delta_{k+\ell,0}\frac{1}{k}\big[ 2k \big]_q \frac{C^k-C^{-k}}{q-q^{-1}} \ , \qquad \big[\tho_k, \tK^{\pm 1}  \big]  =0\ , \label{hh} \\
&& \big[ \tho_k, \tx^\pm_\ell \big]  = \pm \frac{1}{k}\big[ 2k\big]_q C^{\mp |k|/2} \tx^\pm_{k+\ell}\ ,\\ \label{hx}
&& \tK \tx^\pm_k \tK^{-1}    =  q^{\pm 2} \tx^\pm_{k} \ ,\label{gl2}\\
&& \tx^\pm_{k+1} \tx^\pm_\ell  -q^{\pm 2} \tx^\pm_{\ell} \tx^\pm_{k+1} = q^{\pm 2} \tx^\pm_{k} \tx^\pm_{\ell+1} - \tx^\pm_{\ell+1} \tx^\pm_k \ ,\label{gl3}\\     
&& \big[ \tx^+_k, \tx^-_\ell   \big]  = \frac{(C^{(k-\ell)/2} \psi_{k+\ell} - C^{-(k-\ell)/2}\phi_{k+\ell})}{q-q^{-1}}\ ,\label{gl4}
\eeqa
where the $\psi_{k}$ and $\phi_{k}$  are defined by the following equalities of formal power series in the indeterminate $z$:
\beqa
\psi(z)&=&\sum_{k=0}^\infty \psi_{k} z^{-k} = \tK \exp\left( (q-q^{-1}) \sum_{k=1}^{\infty} \tho_k  z^{-k} \right) \ ,\label{psi}\\
 \phi(z)&=& \sum_{k=0}^\infty \phi_{-k} z = \tK^{-1} \exp\left(- (q-q^{-1}) \sum_{k=1}^{\infty} \tho_{-k}  z \right)\ .\label{phi}
\eeqa
Note that there exists an automorphism    such that:
\beqa
\theta: &&   \tx^\pm_k \mapsto  \tx^\mp_k \ ,\quad \tho_k \mapsto -\tho_k\ ,\quad \tK \mapsto \tK\ ,\quad C \mapsto C^{-1}, \ \quad q \mapsto q^{-1}\ .\label{thetq}
\eeqa

An isomorphism $U_q^{DJ}\rightarrow U_q^{Dr}$ is given by (see e.g \cite[p. 393]{CPb}:
\beqa
\qquad
 K_0 \mapsto C\tK^{-1}\ , \quad K_1 \mapsto \tK \ ,\quad
 E_1 \mapsto  \tx_0^+\ , \quad E_0 \mapsto  \tx_1^-\tK^{-1}\ ,\quad
 F_1 \mapsto \tx_0^-\ , \quad F_0 \mapsto \tK\tx_{-1}^+\ .\label{isoCP}
\eeqa

Note that it is still an open problem to find the complete Hopf algebra isomorphism between $U_q^{DJ}$ and $U_q^{Dr}$. Only partial information is known, see e.g. \cite[Section 4.4]{CP}.

\end{appendix}

\vspace{2mm}

\providecommand{\bysame}{\leavevmode\hbox to3em{\hrulefill}\thinspace}

\end{document}